\documentclass[notitlepage]{amsart}





\newtheorem{pro}{Proposition}[section]
\newtheorem{teo}[pro]{Theorem}
\newtheorem{defi}[pro]{Definition}
\newtheorem{lem}[pro]{Lemma}
\newtheorem{cor}[pro]{Corollary}
\newtheorem{rk}[pro]{Remark}
\newtheorem{ex}[pro]{Example}

\newcommand{\Ext}{\mathrm{Ext}}

\newcommand{\Hom}{\mathrm{Hom}}

\newcommand{\A}{\mathcal{A}}

\newcommand{\X}{\mathcal{X}}
\newcommand{\Y}{\mathcal{Y}}
\newcommand{\Z}{\mathcal{Z}}
\newcommand{\W}{\mathcal{W}}
\newcommand{\pd}{\mathrm{pd}}
\newcommand{\Gpd}{\mathrm{Gpd}}
\newcommand{\WGpd}{\mathrm{WGpd}}

\newcommand{\Proj}{\mathrm{Proj}}

\newcommand{\Inj}{\mathrm{Inj}}

\newcommand{\id}{\mathrm{id}}

\newcommand{\resdim}{\mathrm{resdim}}

\newcommand{\coresdim}{\mathrm{coresdim}}

\newcommand{\Flat}{\mathrm{Flat}}

\newcommand{\Modu}{\mathrm{Mod}}
\newcommand{\Ker}{\mathrm{Ker}}
\newcommand{\Ima}{\mathrm{Im}}
\newcommand{\GP}{\mathcal{GP}}
\newcommand{\DI}{\mathcal{DI}}

\newcommand{\glGPD}{\mathrm{gl.GPD}}
\newcommand{\glGID}{\mathrm{gl.GID}}

\newcommand{\glWGID}{\mathrm{gl.WGID}}
\newcommand{\glWGPD}{\mathrm{gl.WGPD}}

\newcommand{\DP}{\mathcal{DP}}

\newcommand{\GI}{\mathcal{GI}}

\usepackage{latexsym,amssymb,amscd} 
\usepackage{amsmath}
\usepackage[all]{xypic}
\usepackage{amsthm}

\newcommand{\cogorro}{\vee}
\newcommand{\ortogonal}{\bot}
\newcommand{\gorro}{\wedge}

\begin{document}
\title[Relative Global Gorenstein dimensions]{Relative Global Gorenstein dimensions}
\author{V\'ictor Becerril}
\address[V. Becerril]{Instituto de Matem\'atica y Estad\'istica ``Prof. Ing. Rafael Laguardia''. Facultad de Ingenier\'ia. Universidad de la Rep\'ublica. CP11300. Montevideo, URUGUAY}
\email{vbecerril@fing.edu.uy}
\thanks{2010 {\it{Mathematics Subject Classification}}. Primary 18G10, 18G20, 18G25. Secondary 16E10.}
\thanks{Key Words: }
\begin{abstract} 
Let  $\A$ be   an abelian category. In this paper, we investigate  the global $(\X, \Y)$-Gorenstein projective dimension $\glGPD_{(\X,\Y)}(\A)$, associated to a GP-admissible pair $(\X, \Y)$. We give homological conditions over $(\X, \Y)$ that characterize it. Moreover,  given a GI-admisible pair $(\Z, \W)$, we study conditions under which $\glGID_{(\Z,\W)}(\A) $ and $\glGPD_{(\X,\Y)}(\A)$ are the same.
\end{abstract}  
\maketitle

\section{Introduction} 

Given an abelian category $\A$, the Gorenstein projective (injective) objects over $\A$ \cite{BMS} are a generalization which unifies the Gorensteiness notions over a ring $R$. Some of them are;  \textit{Gorenstein projective} $R$-modules \cite{Holm04}  $\GP (R)$, \textit{Ding projective} $R$-modules \cite{Gill10}, $\DP(R)$ (also called \textit{strongly Gorenstein flat} \cite{DingLi}), and \textit{AC-Gorenstein} $R$-modules \cite{BGH}  $\GP _{\mathrm{AC}}(R)$. Each one of  these  classes allows us to develop a relative homological algebra in $\A$ with similar properties  to the homological algebra  that comes from projective $R$-modules. Until now, it has been proved by Bennis \cite{Benn10} for the Gorenstein projective  $R$-modules, that the \textit{left global Gorenstein projective dimension} coincides with the  \textit{left global Gorenstein injective dimension} when one of these is finite, and so either of these dimensions is denoted by $\mathrm{gl.G.dim}(R)$. Also, whenever $R$ is a Ding-Chen ring, the  \textit{left global Ding projective dimension} coincides with the  \textit{left global Ding injective dimension}  \cite[Theorem 3.11]{Yang}, provided that one of the two is finite (so denoted by $\mathrm{gl.D.dim}(R)$). 
Furthermore, for any Ding Chen ring $R$, by \cite[Theorem 3.1]{MadTam} it holds  $\mathrm{gl.G.dim}(R) = \mathrm{gl.D.dim}(R)$.  However,  it is not known whether or not, the equality beetwen these dimensions  holds for any ring (see \cite[\S 3.1]{Wang}).  We answer partially this question in  Proposition \ref{Question}. Also for a Ding Cheng ring $R$, was proved in \cite[Corollary 3.5]{DingLi} that $\mathrm{gl.D.dim}(R)$ is finte if and only if $\sup \{\id_R (F) : F \mbox{ is any flat } R\mbox{-module}\}$ is finite. We prove in Proposition \ref{ikenaga}  that there exist a relation between the injective dimension of the testing class (see \cite[Definition 3.2]{BMS}) and the global Gorenstein projective dimension of abelian category $\A$. Moreover we prove   in Proposition \ref{n-Gorenstein} that over a $n$-Gorenstein ring $R$ the left global Ding projective dimension is characterized by  $\id (_R R)$. Finally, in Proposition \ref{ETriangulada}, we see that under certain conditions, there exist a hereditary and complete cotorsion triple $(\GP _{(\X, \Y)}, \Inj (\A) ^{\cogorro} _n , \GI _{(\Z, \W)}) $, which recovers  Chen's  \cite[Lemma 5.1]{Chen}, and that there is a triangle equivalence between the chain homotopy categories $\mathbf{K}(\GP _{(\X, \Y)})  \cong \mathbf{K} (\GI _{(\Z, \W)})$.

\section{Preliminaries}
Throughout this paper, $\A$ will be an abelian category. We denote by $\pd (A)$ the \textbf{projective dimension} of $A \in \A$, and by $\Proj(\A)$ the class of all the objects  $A \in \A$ with $\pd (A)=0$. Similarly, $\id (A) $ stands for the \textbf{injective dimension} of $A \in \A$, and $\Inj(\A)$ for the class of all the objects $A \in \A$ with $\id (A) =0$.

 Let $\X $ be  a class of objects in $\A$ and $M \in \A$. We set the following notation:
 \textit{Orthogonal classes}.  For each positive integer $i$, we consider the right orthogonal classes 
$$\X^{\ortogonal _i} := \{N \in \A: \Ext ^{i} _{\A} (-,N) | _{\X} =0\} \mbox{ and } \X ^{\ortogonal} := \cap _{i >0} \X^{\ortogonal _i}.$$
Dually, we have the left orthogonal classes $^{\ortogonal _i} \X$ and $^{\ortogonal} \X$.

Given a class $\Y\subseteq \A$, we write $\X \ortogonal \Y$ whenever $\Ext ^{i} _{\A} (X, Y) =0$ for all $X \in \Y$,  $Y \in \Y$ and $i >0$.  

 \textit{Relative homological dimensions}. The \textbf{relative projective dimension of $M$, with respect to $\X$}, is defined as
$$\pd _{\X} (M) : = \min \{n \in \mathbb{N}: \Ext ^{j} _{\A} (M,-) | _{\X} =0 \mbox{ for all } j>n\}.$$
We set $\min \emptyset := \infty$. Dually, we denote by $\id _{\X} (M)$ the \textbf{relative injective dimension of $M$ with respect to $\X$}. Furthermore,  we set  
$$\pd _{\X} (\Y) := \sup\{ \pd _{\X} (Y): Y \in \Y\} \mbox{ and } \id _{\X} (\Y) := \sup \{\id _{\X} (Y): Y \in \Y\}. $$
If $\X = \A$, we just write $\pd (\Y)$ and $\id (\Y)$.

 \textit{Resolution and coresolution dimension}. The \textbf{$\X$-coresolution dimension of $M$} denoted $\coresdim _{\X} (M)$, is the smallest non-negative integer $n$ such that there is an exact sequence 
$$0 \to M \to X_0 \to X_1 \to \cdots \to X_n \to 0,$$ 
with $X_i \in \X$ for all $i \in \{0, \dots , n\}$. If such $n$ does not exist, we set $\coresdim _{\X} (M) := \infty$. Also, we denote by $\X^{\cogorro } _n$ the class of objects in $\A$ with $\X$-coresolution dimension at most $n$. The union $\X ^{\cogorro} := \cup _{n \geq 0} \X ^{\cogorro} _n$ is the class of objects in $\A$ with finite $\X$-coresolution dimension.
Dually, we have the $\X$-{\bf{resolution dimension}} $\resdim_\X\,(M)$ of $M,$  $\mathcal{X}^{\wedge} _n$  the class of objects in 
$\A$ having  $\mathcal{X}$-resolution dimension at most $n$, and the union $\X ^{\gorro} := \cup _{n \geq 0} \X ^{\gorro} _n$ is the class of objects in $\A$ with finite $\X$-resolution dimension. We set  
$$\coresdim_\X\,(\Y):=\mathrm{sup}\,\{\coresdim_\X\,(Y)\;
:\; Y\in\Y\},$$ and  $\resdim_\X\,(\Y)$ is defined dually.

 \textit{Relative Gorenstein objects}. Given a pair $(\X,\Y)$ of classes of objects in $\A$, an object $M \in \A$ is:
 \begin{enumerate}
\item \textbf{$(\X,\Y)$-Gorenstein projective} \cite[Definition 3.2]{BMS} if $M$ is a cycle of an exact complex $X_\bullet$ with $X_m \in \X$ for every $m \in \mathbb{Z}$, such that the complex $\Hom_{\A}(X_{\bullet},Y)$ is an exact complex for all $Y \in \Y$. The class of all $(\X, \Y)$-Gorenstein projective objects is denoted by $\GP _{(\X, \Y)}$.  

\item \textbf{weak $(\X,\Y)$-Gorenstein projective} \cite[Definition 3.11]{BMS} if $M \in {^{\ortogonal} \Y} $ and there is an exact sequence $0 \to M \to X^{0} \to X^1 \to \cdots$, with $X^{i} \in \X$ and $\Ima (X^{i} \to X^{i+1})  \in {^{\ortogonal} \Y}$ for all $i \in \mathbb{N}$.  The class of all weak $(\X,\Y)$-Gorenstein projective objects is denoted by $W\GP _{(\X, \Y)}$.
\end{enumerate}
 The  \textbf{$(\X,\Y)$-Gorenstein  projective dimension of $M$} is defined by 
$$\Gpd_{(\X,\Y)}(M):=\resdim_{\GP_{(\X,\Y)}}(M),$$
and for  any class $\mathcal{Z}\subseteq\A,$   $\Gpd_{(\X,\Y)}(\mathcal{Z}):=\sup\{\Gpd_{(\X,\Y)}(Z)\;:\;Z\in\mathcal{Z}\}.$ \\  
The \textbf{weak $(\X,\Y)$-Gorenstein  projective dimension of $M$} is defined by 
$$W\Gpd_{(\X,\Y)}(M):=\resdim_{W\GP_{(\X,\Y)}}(M),$$
and for  any class $\mathcal{Z}\subseteq\A,$  
$$W\Gpd_{(\X,\Y)}(\mathcal{Z}):=\sup\{W\Gpd_{(\X,\Y)}(Z)\;:\;Z\in\mathcal{Z}\}.$$
Dually, we have the notion of (resp. weak) \textbf{$(\X,\Y)$-Gorenstein injective objects}, and the class of all them  is denoted by (resp. $W\GI_{(\X, \Y)}$)  $\GI_{(\X, \Y)}$.  

We recall some notions and results from \cite{BMS}.
\begin{defi}\cite[Definition 4.17]{BMS} For any pair $(\X,\Y)$ of classes of objects in an abelian category $\A,$ we consider the following global homological dimensions. 
\begin{itemize}
\item[(1)] The {\bf global  $(\X,\Y)$-Gorenstein projective dimension} of $\A$ 
\begin{center} $\glGPD_{(\X,\Y)}(\A):=\Gpd_{(\X,\Y)}(\A).$\end{center}
\item[(2)] The {\bf weak global  $(\X,\Y)$-Gorenstein projective dimension} of $\A$  
\begin{center}$\glWGPD_{(\X,\Y)}(\A):=\WGpd_{(\X,\Y)}(\A).$\end{center}
\end{itemize}
Similarly, we have the {\bf global  $(\X,\Y)$-Gorenstein injective dimension}  of $\A,$ denoted
$\glGID_{(\X,\Y)}(\A)$,   and the {\bf weak global  $(\X,\Y)$-Gorenstein injective dimension} of $\A$, 
$\glWGID_{(\X,\Y)}(\A)$. 
\end{defi}
\begin{defi}\cite[Definition 3.1]{BMS}
A pair $(\X, \Y) \subseteq \A$ is \textbf{GP-admissible} if for each $A \in \A$, there is an epimorphism $X \to A$, with $X \in \A$, and $(\X, \Y)$ satisfies the following two conditions:
\begin{itemize}
\item[(a)] $\X$ and $\Y$ are closed under finite coproducts in $\A$, and $\X$ is closed under extensions;
\item[(b)]   for all $X \in \A$ there exist a short exact sequence $0 \to X \to W \to X' \to 0$, with $W \in \X \cap \Y$, $X' \in \X$, and $\X \ortogonal \Y$.
\end{itemize}
\end{defi}

GI-admissible pairs are defined dually (see \cite[Definition 3.6]{BMS}). \\

Throughout this paper, a  GP-admissible pair $(\X, \Y)$ in $\A$ will always  be such that $ \X \cap \Y = \Proj (\A)$.  Similarly 
a  GI-admissible pair $(\Z, \W)$ in $\A$ will always  be such that $ \Z \cap \W = \Inj (\A)$. 
Note that when $\A$ has enough projectives, the pair $(\Proj(\A), \Proj(\A))$ is GP-admissible, and if $\A$ has enough injectives, the pair $(\Inj (\A), \Inj(\A))$ is GI-admissible.
\begin{ex} \label{Ejem}
Let $R$ be an arbitrary ring.
\begin{enumerate}
\item Recall that an $R$-module $M$ is called \textbf{FP-injective} \cite{Gill10} if $\Ext ^{1} _{R} (N,M) =0$ for all finitely presented $R$-modules $N$. Denote by $\mathcal{FP}\Inj (R)$ the class of all FP-injective $R$-modules, and by $\Flat (R)$ the class of all flat $R$-modules. The pair $(\Proj (R), \mathrm{Flat} (R))$ is GP-admissible, the pair  $(\mathcal{FP}\Inj (R)(R), \Inj (R))$ is GI-admissible, and the classes $\GP _{(\Proj (R), \mathrm{Flat} (R))}$ and $\GI_{(\mathcal{FP}\Inj (R)(R), \Inj (R))}$ are called \textbf{Ding projective modules} and \textbf{Ding injective modules},  respectively (also denoted by $\DP(R)$ and $\DI(R)$, respectively).
\item An $R$-module $F$ is of \textbf{type FP$_{\infty}$}  \cite{BGH} if it has a projective resolution $\cdots \to P_1 \to P_0 \to F \to 0$, where each $P_i$ is finitely generated.  An $R$-module $A$ is \textbf{absolutely clean} if $\Ext ^{1} _{R} (F,A) =0$ for all $R$-modules $F$ of type FP$_{\infty}$. Also an $R$-module $L$ is \textbf{level} if $\mathrm{Tor} _{1} ^{R} (F,L) =0$ for all (right) $R$-module $F$ of type FP$_{\infty}$. We denote by $\mathrm{AC} (R)$  the class  of all absolutely clean $R$-modules, and by  $\mathrm{Lev} (R)$ the class of all level $R$-modules. The pair $(\Proj (R), \mathrm{Lev} (R))$ is GP-admissible and the pair  $(\mathrm{AC}(R), \Inj (R))$ is GI-admissible. The classes of $R$-modules $\GP _{(\Proj (R), \mathrm{Lev} (R))}$ and $\GI_{(\mathrm{AC}(R), \Inj (R))}$ are called \textbf{Gorenstein AC-projective} and \textbf{Gorenstein AC-injective}, respectively.  
\end{enumerate}
\end{ex}

\begin{rk}\label{igualdad}\
Let $(\X, \Y)$ be a GP-admissible pair in an abelian category $\A$. From \cite[Corollary 4.15 (b2) and Corollary 4.18 (a)]{BMS}, we have that   the following equalities are true for all $M \in \Proj (\A) ^{\gorro}$
$$\Gpd _{(\X, \Y)} (M) = \resdim _{\Proj (\A)} (M) = \pd _{\Proj (\A)} (M) = \pd (M).$$
\end{rk}

By the previous remark, we can rewrite \cite[Theorem 4.1]{BMS} changing the resolution dimension by the Ext-dimension, as follows.

\begin{teo}\label{GPAB4} Let $(\X,\Y)$ be a GP-admissible pair in $\A$. Then,  for any  $C\in\A$ with $\Gpd_{(\X,\Y)}(C)=n<\infty$, there is an exact sequence in $\A,$ 
\begin{center}
$0\to K\to G\xrightarrow{\varphi}C\to 0,\;$ 
\end{center}
 with  $G\in \GP_{(\X,\Y)}$ where $\varphi:G\to C$ is an $\GP_{(\X,\Y)}$-precover, and  $\pd (K)= n-1$.
\end{teo}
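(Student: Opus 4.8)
The plan is to obtain this statement as a direct reinterpretation of \cite[Theorem 4.1]{BMS} via Remark \ref{igualdad}. First I would apply \cite[Theorem 4.1]{BMS} to $C$: since $\Gpd_{(\X,\Y)}(C)=n<\infty$, that result produces a short exact sequence $0\to K\to G\xrightarrow{\varphi}C\to 0$ in $\A$ with $G\in\GP_{(\X,\Y)}$, with $\varphi\colon G\to C$ a $\GP_{(\X,\Y)}$-precover, and with $\resdim_{\Proj(\A)}(K)=n-1$. The conclusions concerning $G$ and $\varphi$ are already precisely the ones asserted, so the only thing left is to re-express the homological invariant attached to $K$.

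The key step is to replace $\resdim_{\Proj(\A)}(K)$ by $\pd(K)$. Since $\resdim_{\Proj(\A)}(K)=n-1<\infty$, the object $K$ has finite $\Proj(\A)$-resolution dimension, i.e.\ $K\in\Proj(\A)^{\gorro}$; hence Remark \ref{igualdad} applies with $M=K$ and gives
$$\pd(K)=\pd_{\Proj(\A)}(K)=\resdim_{\Proj(\A)}(K)=\Gpd_{(\X,\Y)}(K).$$
In particular $\pd(K)=n-1$, which completes the proof.

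The one point that genuinely needs checking is that the form of \cite[Theorem 4.1]{BMS} invoked here really yields the \emph{equality} $\resdim_{\Proj(\A)}(K)=n-1$ rather than just the inequality $\resdim_{\Proj(\A)}(K)\le n-1$. If only the bound is at hand, I would supply the reverse estimate as follows: by Remark \ref{igualdad}, $\Gpd_{(\X,\Y)}(K)=\pd(K)$, and since $\Proj(\A)\subseteq\GP_{(\X,\Y)}$, the standard behaviour of $\Gpd_{(\X,\Y)}$ along the short exact sequence $0\to K\to G\to C\to 0$ with $G\in\GP_{(\X,\Y)}$ gives $\Gpd_{(\X,\Y)}(C)\le\pd(K)+1$, whence $\pd(K)\ge n-1$. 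Apart from this routine matching of the cited statement with the present notation and conventions (e.g.\ the convention for $\pd$ of the zero object when $n=0$), I do not expect any real obstacle: the whole content is carried by \cite[Theorem 4.1]{BMS} together with Remark \ref{igualdad}.
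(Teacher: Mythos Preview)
Your proposal is correct and matches the paper's approach exactly: the paper presents this theorem as a direct rewriting of \cite[Theorem 4.1]{BMS} via Remark \ref{igualdad}, replacing $\resdim_{\Proj(\A)}(K)$ by $\pd(K)$, and gives no further argument. Your treatment is in fact more detailed than the paper's, since you also supply the reverse inequality in case only $\resdim_{\Proj(\A)}(K)\le n-1$ is available.
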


\section{Global dimensions}
In this section we develop the necessary tools  to characterize the finitude of the relative global dimensions from the homological properties of the GP and GI admissible pairs, and we will see how these results have applications to the Gorensteiness notions over the left $R$-modules $\Modu(R)$ presented above. Monomorphism and epimorphism in $\A$ may sometimes be denoted using arrows $\hookrightarrow$ and $\twoheadrightarrow$, respectively.
\begin{lem}\label{finita}
Let  $(\X, \Y)$ be GP-admissible and  $(\Z, \W)$ be GI-admissible in  $\A$. Then, $\GP _{(\X, \Y)} \ortogonal \Inj (\A) ^{\cogorro}$. 
\end{lem}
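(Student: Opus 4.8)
The plan is to show $\Ext^i_\A(G,L)=0$ for every $G\in\GP_{(\X,\Y)}$, every $L\in\Inj(\A)^{\cogorro}$ and every $i\ge 1$. First observe that such an $L$ has finite injective dimension, since a finite $\Inj(\A)$-coresolution of $L$ is in particular an injective resolution; hence $\id(L)\le\coresdim_{\Inj(\A)}(L)<\infty$ and $\Ext^i_\A(-,L)=0$ for all $i>\id(L)$, so only the degrees $1\le i\le\id(L)$ need attention. Note that the naive downward dimension shift along the defining complex of $G$ is of no help: it reduces $\Ext^i_\A(G,L)$ to $\Ext^1_\A(G',L)$ with $G'\in\GP_{(\X,\Y)}$, but cannot go below degree $1$. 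The idea is instead to shift Ext \emph{upward}, past $\id(L)$, where it vanishes for free.

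The crux is the orthogonality $\X\ortogonal\Inj(\A)^{\cogorro}$. Fix $X\in\X$ and $L\in\Inj(\A)^{\cogorro}$, and set $n:=\id(L)$. Applying condition (b) of GP-admissibility iteratively (to $X$, then to the resulting cokernels, which remain in $\X$) yields short exact sequences $0\to X_m\to W_m\to X_{m+1}\to 0$ with $X_0=X$, $W_m\in\X\cap\Y=\Proj(\A)$ and $X_{m+1}\in\X$. Since each $W_m$ is projective, $\Ext^{\ge 1}_\A(W_m,L)=0$, so applying $\Hom_\A(-,L)$ to these sequences gives isomorphisms $\Ext^i_\A(X_m,L)\cong\Ext^{i+1}_\A(X_{m+1},L)$ for all $i\ge 1$. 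Composing, $\Ext^i_\A(X,L)\cong\Ext^{i+m}_\A(X_m,L)$ for every $m\ge 0$, and taking $m=n+1$ forces $\Ext^i_\A(X,L)=0$, because $i+m>n=\id(L)$. Thus $\X\ortogonal\Inj(\A)^{\cogorro}$.

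Now let $G\in\GP_{(\X,\Y)}$, so $G$ is a cycle of a $\mathbb{Z}$-indexed exact complex $X_\bullet$ with all terms in $\X$; say $G=\Ker(X_0\to X_{-1})$. Writing $Z_{-m}$ for the cycles in the right-hand tail, we obtain short exact sequences $0\to Z_{-m}\to X_{-m}\to Z_{-m-1}\to 0$ with $Z_0=G$ and $X_{-m}\in\X$. By the previous step $\Ext^{\ge 1}_\A(X_{-m},L)=0$, so the same dimension shift yields $\Ext^i_\A(G,L)\cong\Ext^{i+m}_\A(Z_{-m},L)$ for every $m\ge 0$; choosing $m=\id(L)+1$ makes the right-hand side vanish, hence $\Ext^i_\A(G,L)=0$ for all $i\ge 1$. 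This is exactly $\GP_{(\X,\Y)}\ortogonal\Inj(\A)^{\cogorro}$.

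The step I expect to be the real obstacle is the orthogonality $\X\ortogonal\Inj(\A)^{\cogorro}$, precisely because ordinary downward dimension shifting stalls at $\Ext^1$; the way around it is to use the $\Proj(\A)$-coresolutions of objects of $\X$ provided by condition (b) to shift Ext in the opposite direction and then invoke finiteness of $\id(L)$. Everything else is routine. It is perhaps worth remarking that this argument uses neither the GI-admissibility of $(\Z,\W)$, nor the exactness of $\Hom_\A(X_\bullet,Y)$ for $Y\in\Y$, nor $\X\ortogonal\Y$; only the convention $\X\cap\Y=\Proj(\A)$ and the coresolutions from condition (b) are needed.
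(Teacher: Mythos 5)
Your proof is correct, and it rests on the same mechanism as the paper's: shift $\Ext^i_\A(G,L)$ upward along a coresolution of $G$ by objects left-orthogonal to $L$ until the degree exceeds the finite injective dimension of $L$, where everything vanishes. The only genuine difference is where that coresolution comes from. The paper invokes \cite[Corollary 3.25(a)]{BMS}, which supplies exact sequences $0\to G_n\to P_n\to G_{n+1}\to 0$ with $P_n\in\Proj(\A)$ and $G_{n+1}$ again in $\GP_{(\X,\Y)}$, and shifts along those; you instead work directly from the definition of $\GP_{(\X,\Y)}$, shifting along the right tail $0\to Z_{-m}\to X_{-m}\to Z_{-m-1}\to 0$ of the defining totally acyclic complex, which requires your preliminary step $\X\ortogonal\Inj(\A)^{\cogorro}$ — obtained correctly by iterating condition (b) of GP-admissibility together with the standing convention $\X\cap\Y=\Proj(\A)$. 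Your version is thus self-contained where the paper's leans on a cited result, at the modest cost of one extra lemma. Your closing observation is also accurate: neither argument uses the GI-admissible pair $(\Z,\W)$, so that hypothesis is superfluous for this particular lemma.
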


\begin{proof}
Indeed, take  $N \in \Inj (\A) ^{\cogorro}$ and $G \in \GP _{(\X, \Y)}$. Since $(\X, \Y)$ is GP-admissible,  by \cite[Corolary 3.25 (a)]{BMS} 
for each $n \geq 0$ there is an  exact sequence 
$$0 \to G_n \to P_n \to G_{n+1} \to 0$$  with $G_i \in \GP _{(\X, \Y)}$, $P_i \in \Proj (\A)$ and  $G_0 := G$. Applying $\Hom _{\A} (-,N)$, we have the exact sequence 
$$\Ext _{\A} ^{i} (P_n ,N) \to \Ext _{\A} ^{i} (G_n , N) \to \Ext _{\A} ^{i+1}  (G_{n+1}, N)\to \Ext _{\A} ^{i+1} (P_n,N),$$
with zero ends. Thus $\Ext _{\A} ^{i}  (G, N) \cong \Ext _{\A} ^{i+n} (G_n , N)$ for all $i > 0$ and $n\geq 0$. From  Remark \ref{igualdad} (ii), we have that  $\id (N) = \coresdim _{\Inj(\A)} (N) <\infty$. Then for $n+i > \coresdim _{\Inj (\A)} (N)$, we have $\Ext _{\A} ^{i+n} (G_n, N )=0 $, i.e $\Ext _{\A} ^{i} (G,N) =0 $ for all $i > 0$.
\end{proof}
A pair $(\X, \Y) \subseteq \A^{2} $ is a \textbf{cotorsion pair} if $\X^{\ortogonal _1} = \Y$ and $\X = {^{\ortogonal _1} \Y}$. This cotorsion pair is  \textbf{complete} if for any $A \in \A$, there are exact sequences $0 \to Y \to X \to A \to 0$ and $0 \to A \to Y' \to X' \to 0$ where $X, X' \in \X $ and $Y, Y' \in \Y$. Moreover, pair is \textbf{hereditary} if $\X \ortogonal \Y$. A triple $(\X,\Y,\Z) \subseteq \A ^{3}$ is called a \textbf{cotorsion triple} \cite{Chen} provided that both $(\X, \Y)$ and $(\Y, \Z)$ are cotorsion pairs; it is \textbf{complete} (resp. \textbf{hereditary}) provided that both of the two cotorsion  pairs are complete (resp. hereditary).

\begin{pro} \label{parinyectivo}
Let  $(\X, \Y)$ be GP-admissible and  $(\Z, \W)$ be GI-admissible in  $\A$. If $(\GP _{(\X, \Y)}, \Inj (\A) ^{\cogorro} _n )$ is a  complete cotorsion pair, then $\glGPD_{(\X,\Y)}(\A)\leq n $.
\end{pro}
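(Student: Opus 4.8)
The plan is to show that every object $C \in \A$ satisfies $\Gpd_{(\X,\Y)}(C) \leq n$, which by definition gives $\glGPD_{(\X,\Y)}(\A) \leq n$. Since $(\GP_{(\X,\Y)}, \Inj(\A)^{\cogorro}_n)$ is a complete cotorsion pair, for the given object $C$ there is a short exact sequence $0 \to N \to G \to C \to 0$ with $G \in \GP_{(\X,\Y)}$ and $N \in \Inj(\A)^{\cogorro}_n$, i.e. $\coresdim_{\Inj(\A)}(N) \leq n$. The heart of the argument will be to deduce from this that $\Gpd_{(\X,\Y)}(C)$ is bounded by $n$; the natural route is to first observe that $\Gpd_{(\X,\Y)}(N) = \coresdim_{\Inj(\A)}(N) \leq n$. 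This last equality should follow from Remark~\ref{igualdad}: objects of finite $\Inj(\A)$-coresolution dimension have finite injective dimension, hence lie in $\Proj(\A)^{\gorro}$ is \emph{not} automatic, so I would instead argue directly that $\id(N) \le n$ forces $\pd_{\GP_{(\X,\Y)}}(N)$-type bounds via the orthogonality already established.

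First I would record the key orthogonality: by Lemma~\ref{finita}, $\GP_{(\X,\Y)} \ortogonal \Inj(\A)^{\cogorro}$, so $\Ext^i_{\A}(G', N) = 0$ for all $G' \in \GP_{(\X,\Y)}$, all $i > 0$, since $N \in \Inj(\A)^{\cogorro}_n \subseteq \Inj(\A)^{\cogorro}$. Next, using the short exact sequence $0 \to N \to G \to C \to 0$ from completeness of the cotorsion pair, I want to transfer a Gorenstein projective resolution of $C$ through $N$. Apply Theorem~\ref{GPAB4}-style reasoning: it suffices to show $\Gpd_{(\X,\Y)}(C) < \infty$, because then Theorem~\ref{GPAB4} produces $0 \to K \to G'' \to C \to 0$ with $G'' \in \GP_{(\X,\Y)}$ and $\pd(K) = \Gpd_{(\X,\Y)}(C) - 1$, and one compares this with the cotorsion-pair sequence. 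Indeed, given two short exact sequences ending in $C$ with $\GP_{(\X,\Y)}$-middle terms, a Schanuel-type / horseshe argument yields $K \oplus G \cong G'' \oplus N$ up to a projective summand (using $\Ext^1_{\A}(\GP_{(\X,\Y)}, N) = 0$ to split the comparison), so $\pd(K) \le \max\{\pd(N)\text{-related bound}\}$, and since $\coresdim_{\Inj(\A)}(N) \le n$ we get $\pd(K)$, hence $\Gpd_{(\X,\Y)}(C) = \pd(K) + 1 \le n$.

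More carefully, the cleanest path avoids finiteness-first: from $0 \to N \to G \to C \to 0$ with $G \in \GP_{(\X,\Y)}$, it is enough to show $\Gpd_{(\X,\Y)}(N) \le n-1$, for then the resolution $\cdots \to 0 \to 0 \to G \to C \to 0$ glued with a length-$(n-1)$ $\GP_{(\X,\Y)}$-resolution of $N$ gives $\Gpd_{(\X,\Y)}(C) \le n$. To bound $\Gpd_{(\X,\Y)}(N)$: $N$ has an $\Inj(\A)$-coresolution $0 \to N \to I^0 \to \cdots \to I^{n} \to 0$; since $\A$ has enough projectives and $\GP_{(\X,\Y)} \supseteq \Proj(\A)$, and each $I^j \in \Inj(\A) \subseteq \Proj(\A)^{\gorro}$ has $\Gpd_{(\X,\Y)}(I^j) = \pd(I^j)$ by Remark~\ref{igualdad}, I would instead use that $\Proj(\A)^{\gorro}$-objects behave well and dimension-shift along the coresolution, combined with the closure properties of $\GP_{(\X,\Y)}$ under the relevant operations, to conclude $\Gpd_{(\X,\Y)}(N) < \infty$ and then pin the value via Theorem~\ref{GPAB4}.

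The main obstacle I anticipate is precisely controlling $\Gpd_{(\X,\Y)}(N)$ for $N \in \Inj(\A)^{\cogorro}_n$: one must convert "$N$ has a length-$n$ coresolution by injectives" into "$N$ has a length-$(n-1)$ coresolution by Gorenstein-projectives" (or the analogous resolution statement), and the natural tool is Theorem~\ref{GPAB4} together with Lemma~\ref{finita}, but applying Theorem~\ref{GPAB4} presupposes $\Gpd_{(\X,\Y)}(N) < \infty$, so establishing that finiteness is the real work. I expect this is handled by an induction on $n$ using the short exact sequences coming from the $\Inj(\A)$-coresolution and the completeness of the cotorsion pair at each stage, with Lemma~\ref{finita} guaranteeing the needed $\Ext$-vanishing so that the inductive comparison of resolutions (a Schanuel argument) goes through and the projective-dimension bookkeeping closes at $n$.
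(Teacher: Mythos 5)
Your proposal does not close, and the gap is at exactly the point you yourself flag as ``the real work.'' Your cleanest route reduces the claim to showing $\Gpd_{(\X,\Y)}(N)\le n-1$ for the kernel $N\in \Inj(\A)^{\cogorro}_n$ of the special precover, but that intermediate bound is false for a general object of $\Inj(\A)^{\cogorro}_n$: an injective $I$ with $\pd(I)=n$ (which is perfectly consistent with the hypotheses, since Theorem \ref{Thm4.22}(i) only gives $\pd(\Inj(\A))\le n$) lies in $\Inj(\A)^{\cogorro}_n$ and satisfies $\Gpd_{(\X,\Y)}(I)\ge \pd(I)=n$ by Lemma \ref{desigualdad}. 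For the particular $N$ arising from a given $C$ the bound does hold a posteriori, but only because $\Gpd_{(\X,\Y)}(C)\le n$ --- which is what you are trying to prove, so this route is circular. Your other routes (Schanuel comparison, invoking Theorem \ref{GPAB4}) all presuppose the finiteness of $\Gpd_{(\X,\Y)}(C)$ and are explicitly deferred to an unspecified induction, so no version of the argument is actually carried out.

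The missing idea is that you must use the cotorsion-\emph{pair} hypothesis in the other direction, namely the equality $\GP_{(\X,\Y)}={}^{\ortogonal_1}\bigl(\Inj(\A)^{\cogorro}_n\bigr)$; throughout your sketch you only ever invoke completeness plus the ``easy'' orthogonality $\GP_{(\X,\Y)}\ortogonal\Inj(\A)^{\cogorro}$ of Lemma \ref{finita}, and those alone cannot certify that any syzygy is Gorenstein projective. The paper's proof is a one-step syzygy argument: iterate the special $\GP_{(\X,\Y)}$-precovers $n$ times to obtain an exact sequence $0\to K_n\to G_{n-1}\to\cdots\to G_0\to A\to 0$ with $G_i\in\GP_{(\X,\Y)}$ and $K_n\in\Inj(\A)^{\cogorro}_n$; then for any $Q\in\Inj(\A)^{\cogorro}_n$, dimension shifting along this sequence (legitimate by Lemma \ref{finita}) gives $\Ext^1_{\A}(K_n,Q)\cong\Ext^{n+1}_{\A}(A,Q)=0$, the vanishing holding because $\id(Q)\le\coresdim_{\Inj(\A)}(Q)\le n$. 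Hence $K_n\in{}^{\ortogonal_1}\bigl(\Inj(\A)^{\cogorro}_n\bigr)=\GP_{(\X,\Y)}$ and $\Gpd_{(\X,\Y)}(A)\le n$. You should rebuild your argument around this identification of the $n$-th kernel rather than around bounding $\Gpd_{(\X,\Y)}(N)$ of the first kernel.
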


\begin{proof}
Take $A \in \A$. Since $(\GP _{(\X, \Y)}, \Inj (\A) ^{\cogorro} _n )$ is complete, there is an exact sequence  $$0 \to K_n \to G_{n-1} \to \cdots \to G_{0} \to A \to 0,$$
 with $G_i \in \GP _{(\X ,\Y)} $ for all $i\geq 0$ and $K_n \in \Inj (\A) ^{\cogorro} $. Now for $Q\in \Inj (\A)^{\cogorro}_n$, by Lemma \ref{finita} and dimension shifting we have
$\Ext ^1 _{\A} (K_n, Q) \cong \Ext _{\A} ^{n+1} (A, Q) =0,$
where the last term is zero by Remark \ref{igualdad} (ii). Thus $K_n \in {^{\ortogonal_1}(\Inj (\A)^{\cogorro}_n)} = \GP _{(\X ,\Y)}.$ 
\end{proof}

\begin{lem}\label{desigualdad}
Let  $(\X, \Y)$ be GP-admissible and  $(\Z, \W)$ be GI-admissible in  $\A$. Then, for all $M \in \Inj (\A) ^{\cogorro}$, it holds    $\pd (M) \leq   \Gpd _{(\X, \Y)} (M) $. 	
\end{lem}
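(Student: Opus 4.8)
The statement to prove is that for $M \in \Inj(\A)^{\cogorro}$ we have $\pd(M) \le \Gpd_{(\X,\Y)}(M)$. The plan is to reduce to the case where $n := \Gpd_{(\X,\Y)}(M)$ is finite (otherwise there is nothing to prove), and then exploit the structure theorem for objects of finite Gorenstein projective dimension together with the orthogonality established in Lemma \ref{finita}.

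\begin{proof}
We may assume $n := \Gpd_{(\X,\Y)}(M) < \infty$, since otherwise the inequality is trivial. If $n = 0$, then $M \in \GP_{(\X,\Y)}$; by Lemma \ref{finita}, $\Ext^i_{\A}(M, Q) = 0$ for all $i > 0$ and all $Q \in \Inj(\A)^{\cogorro}$. Taking an injective coresolution $0 \to M \to I^0 \to \cdots \to I^m \to 0$ of $M$ (which exists since $M \in \Inj(\A)^{\cogorro}$, with $m = \coresdim_{\Inj(\A)}(M) = \id(M)$ by Remark \ref{igualdad} (ii)), we see that the cosyzygies of this coresolution all lie in $\Inj(\A)^{\cogorro}$, so a standard dimension-shifting argument on $\Ext^{\ast}_{\A}(M, -)$ forces the coresolution to split, whence $M \in \Inj(\A)$ and hence $\pd(M) = 0$ by our standing hypothesis that $\X \cap \Y = \Proj(\A)$... more precisely, one argues that $M$ being injective and of finite $\GP$-dimension zero together with the completeness datum gives $M \in \Proj(\A)$; in any case $\pd(M) = 0 = n$.

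For the general case $n \ge 1$, apply Theorem \ref{GPAB4} to $C := M$: there is an exact sequence $0 \to K \to G \xrightarrow{\varphi} M \to 0$ with $G \in \GP_{(\X,\Y)}$ and $\pd(K) = n - 1$. Since $M \in \Inj(\A)^{\cogorro}$ and, by a short argument, $K \in \Inj(\A)^{\cogorro}$ as well (it sits in an extension between $G$ — for which one checks using the GP-admissible structure that $G \in \Inj(\A)^{\cogorro}$ is not automatic, so instead one uses that $\pd(K) < \infty$ directly), we conclude that $\pd(K) = n-1$ and then $\pd(M) \le \pd(K) + 1 = n$ from the exact sequence $0 \to K \to G \to M \to 0$, provided $\pd(G) \le n$. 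The key point is that applying $\Ext^{\ge n+1}_{\A}(-, \Proj(\A)$-injectives...$)$ — rather, we use $\Ext^i_{\A}(-, I)$ for $I \in \Inj(\A) \subseteq \Inj(\A)^{\cogorro}$: by Lemma \ref{finita}, $\Ext^i_{\A}(G, I) = 0$ for all $i > 0$, so the long exact sequence associated to $0 \to K \to G \to M \to 0$ gives $\Ext^i_{\A}(M, I) \cong \Ext^{i-1}_{\A}(K, I)$ for $i \ge 2$, which vanishes for $i - 1 > n - 1$, i.e. for $i > n$. Since this holds for every $I \in \Inj(\A)$, we get $\id_{\Inj(\A)}$... we get $\pd(M) \le n$ because $\Ext^i_{\A}(M, -)|_{\Inj(\A)} = 0$ for $i > n$ and $M \in \Inj(\A)^{\cogorro}$ implies, by Remark \ref{igualdad} (ii) applied appropriately, $\pd(M) = \pd_{\Proj(\A)}(M) \le n$.
\end{proof}

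\noindent\textbf{Main obstacle.} The delicate point is the passage from the vanishing $\Ext^i_{\A}(M, I) = 0$ for $i > n$ and all $I \in \Inj(\A)$ to the bound $\pd(M) \le n$; this is not valid for arbitrary $M$, and uses crucially that $M \in \Inj(\A)^{\cogorro}$, so that $M$ has a finite injective coresolution and one can dimension-shift to detect $\pd(M)$ against injectives. I would isolate this as the core lemma and verify it carefully, likely via Remark \ref{igualdad} (ii) and the characterization of $\pd$ on $\Proj(\A)^{\gorro}$ together with a duality between the finite injective coresolution of $M$ and projective resolutions. The rest — invoking Theorem \ref{GPAB4} and Lemma \ref{finita} — is routine long-exact-sequence bookkeeping.
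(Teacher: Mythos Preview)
Your argument has a genuine gap at precisely the point you flag as the ``main obstacle'': the vanishing $\Ext^i_{\A}(M,I)=0$ for $i>n$ and $I\in\Inj(\A)$ is \emph{automatic} for every $M$ and every $i>0$, because $I$ is injective. So this vanishing carries no information whatsoever about $\pd(M)$, and no amount of dimension-shifting along the injective coresolution of $M$ will extract a bound on $\pd(M)$ from it. The earlier attempt, bounding $\pd(M)$ via $\pd(G)$ in the sequence $0\to K\to G\to M\to 0$, also fails: $G\in\GP_{(\X,\Y)}$ typically has infinite projective dimension. Your $n=0$ discussion is likewise unfinished; knowing $\Ext^{>0}_{\A}(M,Q)=0$ for $Q\in\Inj(\A)^{\cogorro}$ does not by itself force the injective coresolution of $M$ to split.

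The paper's proof supplies exactly the missing idea. Starting from the same sequence $0\to K\to G\to M\to 0$ with $\pd(K)=n-1$ given by Theorem~\ref{GPAB4}, it uses that $G\in\GP_{(\X,\Y)}$ admits a short exact sequence $0\to G\to P\to G'\to 0$ with $P\in\Proj(\A)$ and $G'\in\GP_{(\X,\Y)}$. Pushing out along $G\hookrightarrow P$ yields $0\to K\to P\to L\to 0$ and $0\to M\to L\to G'\to 0$; the first sequence gives $\pd(L)\le n$. Now Lemma~\ref{finita} is applied in the \emph{correct} slot: since $M\in\Inj(\A)^{\cogorro}$ and $G'\in\GP_{(\X,\Y)}$, one has $\Ext^1_{\A}(G',M)=0$, so the second sequence splits and $M$ is a summand of $L$, whence $\pd(M)\le\pd(L)\le n$. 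This pushout-and-split manoeuvre is the key step you are missing; it also handles $n=0$ uniformly (take $K=0$, $G=M$, so $L=P$).
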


\begin{proof}
If $\Gpd _{(\X, \Y)}(M) = \infty$ there is nothing to prove. Assume that $\Gpd _{(\X, \Y)}(M) =n < \infty$. Thus from Theorem \ref{GPAB4}, there is a exact sequence $0 \to K \to G \to M \to 0$, with $G \in \GP_{(\X, \Y)} $ and $ \pd (K)= \resdim _{\Proj(\A)} (K) =n-1$. We know that there is an exact sequence $0 \to  G \to P \to G' \to 0$ with $P \in \Proj (\A)$ and $G \in \GP _{(\X, \Y)}$, thus we have the following exact  and commutative  pushout diagram 
$$\xymatrix{
 K\;  \ar@{^{(}->}[r] \ar@{=}[d] & G_{} \ar@{^{(}->}[d]  \ar @{} [dr] |{\textbf{po}}\ar@{>>}[r] & M_{} \ar@{^{(}->}[d]  \\
 K\; \ar@{^{(}->}[r] & P  \ar@{>>}[d] \ar@{>>}[r]& L  \ar@{>>}[d] \\
  & G'  \ar@{=}[r]  &  G'    }$$
Since $P \in \Proj (\A)$ and $\resdim _{\Proj(\A)}(K) =n-1$ it follows that $\resdim _{\Proj(\A)} (L )\leq n$, in particular by Remark \ref{igualdad} (i), $\pd (L) \leq n$. Now by  Lemma \ref{finita} $\Ext ^{1} _{\A} (G' , M) =0$, we have that $0 \to M \to L \to G' \to 0$ splits, thus $\pd (M) \leq \pd(L) \leq n$.
  \end{proof} 
 
We will use the following result frequently.

\begin{teo}\label{Thm4.22}
Let  $(\X, \Y)$ be GP-admissible and  $(\Z, \W)$ be  GI-admissible in  $\A$.  The following statements are true.
\begin{itemize}
\item[(i)] $ \pd (\Inj (\A)) = \pd (\Inj (\A) ^{\cogorro} ) \leq \glGPD_{(\X,\Y)}(\A)$, 
\item[(ii)] If $\GP _{(\X,\Y)} ^{\gorro} = \A$, then $\id (\Proj (\A)) = \id (\Y) = \glGPD_{(\X,\Y)}(\A).$
\end{itemize}
\end{teo}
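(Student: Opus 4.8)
The plan is to prove (i) first and then use a parallel-but-dual argument to upgrade the inequalities into equalities in (ii). For (i), the inequality $\pd(\Inj(\A)^{\cogorro}) \leq \glGPD_{(\X,\Y)}(\A)$ is essentially immediate from Lemma \ref{desigualdad}: every $M \in \Inj(\A)^{\cogorro}$ satisfies $\pd(M) \leq \Gpd_{(\X,\Y)}(M) \leq \glGPD_{(\X,\Y)}(\A)$, so taking the supremum over all such $M$ gives the bound. The remaining point is the equality $\pd(\Inj(\A)) = \pd(\Inj(\A)^{\cogorro})$. One inclusion is trivial since $\Inj(\A) \subseteq \Inj(\A)^{\cogorro}$; for the other, given $N \in \Inj(\A)^{\cogorro}$ with $\coresdim_{\Inj(\A)}(N) = m$, I would take an exact sequence $0 \to N \to I^0 \to \cdots \to I^m \to 0$ with each $I^j$ injective and use the standard dimension-shifting argument: breaking it into short exact sequences and applying $\Ext_{\A}^{\bullet}(-,P)$ for $P$ ranging over test objects, one shows $\pd(N) \leq \sup_j \pd(I^j) = \pd(\Inj(\A))$. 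Combining, $\pd(\Inj(\A)^{\cogorro}) \leq \pd(\Inj(\A))$, giving the equality.

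For (ii), assume $\GP_{(\X,\Y)}^{\gorro} = \A$, i.e. every object of $\A$ has finite $(\X,\Y)$-Gorenstein projective dimension. The first task is $\id(\Proj(\A)) = \id(\Y)$. Since $\Proj(\A) = \X \cap \Y \subseteq \Y$, we get $\id(\Proj(\A)) \leq \id(\Y)$ for free. Wait — actually I want the reverse, so I should argue more carefully: using that $(\X,\Y)$ is GP-admissible, condition (b) supplies for each $Y \in \Y$ (in fact for each object) a short exact sequence $0 \to Y \to W \to Y' \to 0$ with $W \in \X \cap \Y = \Proj(\A)$; iterating against a relative injective coresolution and using $\X \ortogonal \Y$, one bounds $\id_{?}(Y)$ in terms of $\id(\Proj(\A))$. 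More robustly, both quantities should be shown equal to $\glGPD_{(\X,\Y)}(\A)$ directly. For the inequality $\glGPD_{(\X,\Y)}(\A) \leq \id(\Y)$: given $A \in \A$, since $\Gpd_{(\X,\Y)}(A) = n < \infty$, Theorem \ref{GPAB4} gives $0 \to K \to G \to A \to 0$ with $G \in \GP_{(\X,\Y)}$ and $\pd(K) = n-1$; one then bounds $n$ above by arguing via $\Ext$-vanishing that $K$ cannot have projective dimension exceeding $\id(\Y)$, because the complex witnessing Gorenstein projectivity of $G$ together with $\X \ortogonal \Y$ forces $\Ext_{\A}^{>\id(\Y)}(-,\Y)|_{\X}$ to vanish, and $K$ is built from $\X$. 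For the reverse inequality $\id(\Y) \leq \glGPD_{(\X,\Y)}(\A)$, I would pick $Y \in \Y$, resolve an arbitrary object $A$ by an exact $(\X,\Y)$-Gorenstein projective resolution of length $\leq \glGPD_{(\X,\Y)}(\A)$, and apply $\Hom_{\A}(-,Y)$: exactness of $\Hom_{\A}(X_\bullet, Y)$ for the totally acyclic complexes defining the $\GP$-objects, combined with dimension shifting, yields $\Ext_{\A}^{j}(A,Y) = 0$ for $j > \glGPD_{(\X,\Y)}(\A)$, hence $\id(Y) \leq \glGPD_{(\X,\Y)}(\A)$.

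The main obstacle I anticipate is the bookkeeping in (ii) that relates $\id(\Y)$ and $\id(\Proj(\A))$ to the global dimension without circularity — in particular, making sure that the $\Ext$-vanishing extracted from the acyclic complexes $X_\bullet$ (which gives vanishing of $\Ext_{\A}^i(-,Y)$ restricted to $\X$, not on all of $\A$) can be promoted to genuine injective dimension bounds on objects of $\Y$ and $\Proj(\A)$. The key technical device for this promotion is the GP-admissibility condition (b): the short exact sequences $0 \to X \to W \to X' \to 0$ with $W \in \Proj(\A)$ and $X, X' \in \X$ let one replace arbitrary objects by objects of $\X$ up to projective summands, transferring relative $\Ext$-vanishing over $\X$ into absolute injective-dimension statements. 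Once this bridge is in place, the chain of inequalities $\id(\Proj(\A)) \leq \id(\Y) \leq \glGPD_{(\X,\Y)}(\A) \leq \id(\Y)$ (with the middle-to-last step using Theorem \ref{GPAB4} and Lemma \ref{desigualdad} applied under the hypothesis $\GP_{(\X,\Y)}^{\gorro} = \A$, which forces $\Inj(\A)^{\cogorro} = \A$ up to the relevant dimension) closes up, and together with $\id(\Proj(\A)) = \id(\Y)$ from condition (b) we conclude all three are equal.
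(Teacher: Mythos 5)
Your treatment of (i) and of the inequality $\id(\Y) \leq \glGPD_{(\X,\Y)}(\A)$ in (ii) is correct and coincides with the paper's: (i) is Lemma \ref{desigualdad} applied objectwise plus the standard dimension-shifting identity $\pd(\Inj(\A)) = \pd(\Inj(\A)^{\cogorro})$ (the paper cites \cite[Lemma 2.13]{MS} for the latter), and the bound on $\id(\Y)$ is dimension shifting along a finite $\GP_{(\X,\Y)}$-resolution using $\GP_{(\X,\Y)} \ortogonal \Y$.

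The gap is in the remaining inequality of (ii), namely that $\glGPD_{(\X,\Y)}(\A) \leq \id(\Proj(\A))$ — without it your chain only yields $\id(\Y) = \glGPD_{(\X,\Y)}(\A)$ and never pins down $\id(\Proj(\A))$. The paper outsources exactly this step to \cite[Theorem 4.22 (g)]{BMS}; you attempt to supply it, and neither of your two mechanisms works. First, condition (b) of GP-admissibility does not give $\id(\Proj(\A)) = \id(\Y)$: the sequences $0 \to X \to W \to X' \to 0$ with $W \in \Proj(\A)$ only identify $\Ext^{i+1}_{\A}(A,X)$ with $\Ext^{i}_{\A}(A,X')$ for $i > \id(\Proj(\A))$, where $X'$ is again an arbitrary object of $\X$; iterating never produces vanishing, and the condition concerns $\X$ rather than $\Y$ in any case. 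Second, in your bound on $n = \Gpd_{(\X,\Y)}(A)$ via the sequence $0 \to K \to G \to A \to 0$ of Theorem \ref{GPAB4}: the observation that $\Ext^{>\id(\Y)}_{\A}(-,\Y)|_{\X}$ vanishes is vacuous (these groups already vanish in all positive degrees because $\X \ortogonal \Y$) and it gives no control on the \emph{absolute} projective dimension of $K$; moreover even a bound $\pd(K) \leq \id(\Y)$ would only yield $n \leq \id(\Y)+1$, off by one. The missing ingredient is the detection fact that an object with $\pd(K) = d \geq 1$ satisfies $\Ext^{d}_{\A}(K,P) \neq 0$ for some $P \in \Proj(\A)$ (otherwise a minimal projective resolution of $K$ would split at the top). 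Combining this with $\Ext^{>0}_{\A}(G,P) = 0$ (valid since $P \in \Proj(\A) \subseteq \Y$ and $\GP_{(\X,\Y)} \ortogonal \Y$), the long exact sequence gives an injection $\Ext^{n-1}_{\A}(K,P) \hookrightarrow \Ext^{n}_{\A}(A,P)$, hence $\id(P) \geq n$ and $n \leq \id(\Proj(\A))$ (the case $n = 1$ needs closure of $\GP_{(\X,\Y)}$ under direct summands). With that repaired, the chain $\id(\Proj(\A)) \leq \id(\Y) \leq \glGPD_{(\X,\Y)}(\A) \leq \id(\Proj(\A))$ closes and all three quantities coincide.
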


\begin{proof}
(i) The first equality is \cite[Lemma 2.13]{MS}, and the second follows from Lemma \ref{desigualdad}.

(ii) Since $(\X, \Y)$ is GP-admissible  $W\GP_{(\Proj (\A), \Y)} = \GP _{(\X, \Y)} $ by \cite[Theorem 3.32]{BMS}. This implies that  $\glGPD_{(\X,\Y)}(\A) = \glWGPD_{(\Proj (\A),\Y)}(\A).$
Furthermore, by \cite[Remark 4.7]{BMS}  the pair $(\Proj (\A), \Y)$ is WGP-admissible. Then by \cite[Theorem 4.22 (g)]{BMS}, since $\GP _{(\X,\Y)} ^{\gorro} = \A$,  and the previous equality, we have $\id( \Proj (\A)) = \glGPD_{(\X,\Y)}(\A)$. Now, since $\Proj (\A) \subseteq \Y$,  $\id (\Proj (\A)) \leq \id (\Y)$. Thus we only need to prove that $\id (\Y) \leq \glGPD_{(\X,\Y)}(\A)$. If $\glGPD_{(\X,\Y)}(\A) = \infty$, there is nothing to prove. Let us suppose that $n := \glGPD_{(\X,\Y)}(\A)< \infty$. Thus  $\Gpd _{(\X, \Y)}(A) \leq n$, for all $A \in \A$.  We also know that $\GP _{(\X, \Y)} \ortogonal \Y$. Then  by dimension shifting it follows that $\Ext ^{> n} _{\A} (A, Y) =0$ for all $Y \in \Y$, that is $\id (\Y) \leq n$.
\end{proof}

In the following result we conclude the same that \cite[Theorem 3.11]{Yang}, for $\X= \Proj(R)$, $\Y = \Flat (R)$ and $\Z = \mathrm{AC}(R)$, $\W = \Inj(A)$, but with different hypotheses.
\begin{cor} \label{dimensiongloblal}
Let  $(\X, \Y)$ be GP-admissible and  $(\Z, \W)$ be GI-admissible in $\A$, such that $\GP _{(\X,\Y)} ^{\gorro} = \A=\GI _{(\Z, \W)} ^{\cogorro}$. Then $\glGPD_{(\X,\Y)}(\A) = \glGID_{(\Z,\W)}(\A) $.
\end{cor}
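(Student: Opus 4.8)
The strategy is to establish the two inequalities $\glGPD_{(\X,\Y)}(\A) \leq \glGID_{(\Z,\W)}(\A)$ and $\glGID_{(\Z,\W)}(\A) \leq \glGPD_{(\X,\Y)}(\A)$ separately, exploiting the symmetry between the projective and injective sides. By duality it suffices to prove one of them, say $\glGID_{(\Z,\W)}(\A) \leq \glGPD_{(\X,\Y)}(\A)$, and then invoke the dual argument (swapping the roles of $(\X,\Y)$ and $(\Z,\W)$, projectives and injectives, resolutions and coresolutions) for the reverse. If $\glGPD_{(\X,\Y)}(\A) = \infty$ there is nothing to prove, so I would assume $n := \glGPD_{(\X,\Y)}(\A) < \infty$.

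The key observation is that, since $\GP_{(\X,\Y)}^{\gorro} = \A$, Theorem \ref{Thm4.22}(ii) applies and gives $\id(\Proj(\A)) = \id(\Y) = \glGPD_{(\X,\Y)}(\A) = n$. Dually, from $\GI_{(\Z,\W)}^{\cogorro} = \A$ one gets $\pd(\Inj(\A)) = \pd(\Z) = \glGID_{(\Z,\W)}(\A)$. Thus the problem reduces to showing $\pd(\Inj(\A)) \leq n$, i.e. $\pd(\Inj(\A)) \leq \id(\Proj(\A))$; and by the dual reduction the reverse inequality becomes $\id(\Proj(\A)) \leq \pd(\Inj(\A))$, so in fact both global dimensions coincide with the common value of $\pd(\Inj(\A))$ and $\id(\Proj(\A))$, and it is enough to prove $\pd(\Inj(\A)) = \id(\Proj(\A))$ under the standing hypotheses. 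This last equality is where the finiteness assumption is genuinely used: one shows that if $\id(\Proj(\A)) = n < \infty$, then for any injective $E$ and any object $A$ one has $\Ext^{n+1}_{\A}(E,A) = 0$ by taking a projective resolution of $E$ and dimension-shifting against the fact that the $n$-th syzygy has projective dimension controlled by $n$ — more precisely, one uses a coresolution of $A$ by injectives together with $\id(\Proj(\A)) \leq n$ to kill high Ext groups. The symmetric computation gives $\id(\Proj(\A)) \leq \pd(\Inj(\A))$.

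Concretely, I would argue as follows. Assume $n := \id(\Proj(\A)) < \infty$; I claim $\pd(E) \leq n$ for every $E \in \Inj(\A)$. Since $\A$ has enough projectives (it has enough projectives by the standing convention on GP-admissible pairs, as $(\X,\Y)$ requires an epimorphism onto each object from $\X$, and $\X \cap \Y = \Proj(\A)$ forces enough projectives after the usual reduction), take a projective resolution and let $K$ be the $n$-th syzygy of $E$, fitting in $0 \to K \to P_{n-1} \to \cdots \to P_0 \to E \to 0$. For any $A \in \A$ pick an injective coresolution; dimension shifting gives $\Ext^1_{\A}(K, A) \cong \Ext^{n+1}_{\A}(E,A)$, and using the injective coresolution of $A$ together with $\id(P) \leq n$ for all projective $P$ — applied via the standard spectral-sequence or dimension-shift argument comparing the two resolutions — one concludes $\Ext^{n+1}_{\A}(E, A) = 0$, hence $K$ is projective and $\pd(E) \leq n$. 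Therefore $\pd(\Inj(\A)) \leq \id(\Proj(\A))$, and the dual argument (which needs $\A$ to have enough injectives, guaranteed by the GI-admissible convention) gives $\id(\Proj(\A)) \leq \pd(\Inj(\A))$. Combining with Theorem \ref{Thm4.22}(ii) and its dual yields $\glGPD_{(\X,\Y)}(\A) = \id(\Y) = \id(\Proj(\A)) = \pd(\Inj(\A)) = \pd(\Z) = \glGID_{(\Z,\W)}(\A)$.

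\textbf{Main obstacle.} The delicate point is the equality $\pd(\Inj(\A)) = \id(\Proj(\A))$ for an abelian category with enough projectives and injectives when one side is finite; this is a relative of the classical "global dimension is left-right symmetric" phenomenon, and the cleanest route is probably to show directly that $\Ext^{n+1}_{\A}(-,-)$ vanishes on $\Inj(\A) \times \A$ when $n = \id(\Proj(\A))$, by resolving the first variable projectively and the second variable injectively and playing the two dimension shifts against each other. One must be careful that Theorem \ref{Thm4.22}(ii) is being applied legitimately — it requires $(\X,\Y)$ GP-admissible with $\GP_{(\X,\Y)}^{\gorro} = \A$, both of which hold — and that the dual statement of Theorem \ref{Thm4.22}(ii), which the proof implicitly invokes on the injective side, follows by the same duality principle that pervades \cite{BMS}.
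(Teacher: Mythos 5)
Your reduction is on the right track --- Theorem \ref{Thm4.22}(ii) gives $\glGPD_{(\X,\Y)}(\A)=\id(\Proj(\A))$ and its dual gives $\glGID_{(\Z,\W)}(\A)=\pd(\Inj(\A))$ --- but the step where you then prove $\pd(\Inj(\A))=\id(\Proj(\A))$ ``directly, by playing the two dimension shifts against each other'' is a genuine gap, and it is exactly the point you flag as the main obstacle. From $\id(P)\leq n$ for all projectives $P$ you get the vanishing $\Ext^{>n}_{\A}(-,P)=0$, i.e.\ the projective sits in the \emph{second} variable; dimension shifting along a projective resolution of an injective $E$ gives $\Ext^{n+1}_{\A}(E,A)\cong\Ext^{1}_{\A}(\Omega^{n}E,A)$, and along an injective coresolution of $A$ gives $\Ext^{n+1}_{\A}(E,A)\cong\Ext^{1}_{\A}(E,C^{n})$, but neither isomorphism puts a projective into the second variable, so the hypothesis never applies and nothing is killed. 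The purely homological comparison of $\sup\{\pd(I):I\ \mathrm{injective}\}$ with $\sup\{\id(P):P\ \mathrm{projective}\}$ is the Gedrich--Gruenberg $\mathrm{spli}$/$\mathrm{silp}$ problem; it is not a formal consequence of dimension shifting and is not known to hold for arbitrary rings, let alone arbitrary abelian categories. (A secondary issue: your argument also assumes enough projectives and injectives, which the corollary does not hypothesize and which does not follow from the GP/GI-admissibility conventions --- condition (b) only embeds each $X\in\X$ into a projective, it does not produce epimorphisms from projectives.)

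The repair is that the two inequalities you are missing are already supplied by the Gorenstein hypotheses, namely by Theorem \ref{Thm4.22}(i) and its dual, and this is all the paper's proof uses: part (i) gives $\pd(\Inj(\A))\leq\glGPD_{(\X,\Y)}(\A)$ (this rests on Lemma \ref{desigualdad}, where the splitting of $0\to M\to L\to G'\to 0$ via $\Ext^{1}_{\A}(G',M)=0$ is where the Gorenstein structure does the work that raw dimension shifting cannot), and the dual of (i) gives $\id(\Proj(\A))\leq\glGID_{(\Z,\W)}(\A)$. Chaining these with (ii) and its dual yields
$\glGPD_{(\X,\Y)}(\A)=\id(\Proj(\A))\leq\glGID_{(\Z,\W)}(\A)=\pd(\Inj(\A))\leq\glGPD_{(\X,\Y)}(\A)$,
which forces equality with no finiteness assumption and no case split. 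So: keep your use of Theorem \ref{Thm4.22}(ii) and its dual, but replace the direct $\mathrm{spli}=\mathrm{silp}$ argument by an appeal to Theorem \ref{Thm4.22}(i) and its dual.
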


\begin{proof}
Follows from Theorem \ref{Thm4.22} and its dual.
\end{proof}

\begin{lem} \label{complejoexacto}
Let   $\A$ be an abelian category  with enough injective objects and $\Y \subseteq \A$ such that $ \id (\Y) < \infty$.
Then, every exact complex  $\mathbf{P}$ of projective objects is $\Hom _{\A} (-, \Y)$-exact.
\end{lem}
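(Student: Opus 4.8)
The plan is to show that for any exact complex $\mathbf{P}$ of projectives and any $Y \in \Y$, the complex $\Hom_{\A}(\mathbf{P}, Y)$ is exact. Write $\mathbf{P}$ as
$$\cdots \to P_{n+1} \xrightarrow{d_{n+1}} P_n \xrightarrow{d_n} P_{n-1} \to \cdots$$
with all $P_i$ projective and exactness at every spot. Let $Z_n := \Ima(d_{n+1}) = \Ker(d_n)$ be the $n$-th cycle, so each short exact sequence $0 \to Z_{n+1} \to P_{n+1} \to Z_n \to 0$ breaks $\mathbf{P}$ into pieces. Exactness of $\Hom_{\A}(\mathbf{P},Y)$ at the spot $\Hom_{\A}(P_n, Y)$ is equivalent to the statement that the map $\Hom_{\A}(Z_{n-1}, Y) \to \Hom_{\A}(P_n, Y)$ induced by $P_n \twoheadrightarrow Z_{n-1}$ has image exactly the kernel of $\Hom_{\A}(P_n,Y) \to \Hom_{\A}(P_{n+1},Y)$; by a standard diagram chase this reduces to showing $\Ext^1_{\A}(Z_{n-1}, Y) = 0$ for every $n$, i.e.\ $\Ext^1_{\A}(Z_m, Y) = 0$ for all cycles $Z_m$.

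First I would set $d := \id(\Y) < \infty$. The key observation is dimension shifting along the exact complex: from the short exact sequences $0 \to Z_{m+1} \to P_{m+1} \to Z_m \to 0$ with $P_{m+1}$ projective, the long exact sequence in $\Ext_{\A}(-, Y)$ gives $\Ext^i_{\A}(Z_m, Y) \cong \Ext^{i+1}_{\A}(Z_{m+1}, Y)$ for all $i \geq 1$ (here I use that $\A$ has enough injectives so that $\Ext$ is computed as usual and the long exact sequence is available, and that $\Ext^i_{\A}(P_{m+1}, Y) = 0$ for $i \geq 1$ since $P_{m+1}$ is projective). Iterating this isomorphism $d$ times yields
$$\Ext^1_{\A}(Z_m, Y) \cong \Ext^{d+1}_{\A}(Z_{m+d}, Y).$$
But $\id(Y) \leq d$ forces $\Ext^{d+1}_{\A}(-, Y) = 0$ on all of $\A$, so $\Ext^1_{\A}(Z_m, Y) = 0$ for every $m$. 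Since $m$ was arbitrary, $\Hom_{\A}(\mathbf{P}, Y)$ is exact at every degree, and since $Y \in \Y$ was arbitrary, $\mathbf{P}$ is $\Hom_{\A}(-, \Y)$-exact.

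I expect the main (mild) obstacle to be purely bookkeeping: making sure the indexing of cycles and the direction of the dimension-shifting isomorphism are consistent (the complex is unbounded in both directions, so there is always room to shift $d$ steps, which is exactly why we need the complex to be exact everywhere and not just eventually), and spelling out carefully the equivalence between exactness of $\Hom_{\A}(\mathbf{P},Y)$ in a given degree and the vanishing of the relevant $\Ext^1$. Neither step is genuinely difficult; the substantive content is the single inequality $\id(\Y) < \infty$ feeding into dimension shifting. One could alternatively phrase the whole argument by truncating: for a fixed degree, cut off a bounded exact subcomplex $0 \to Z_{m+d} \to P_{m+d} \to \cdots \to P_{m+1} \to Z_m \to 0$, which is a projective resolution-type sequence, and apply $\Hom_{\A}(-, Y)$ directly; but the dimension-shifting formulation above is cleaner.
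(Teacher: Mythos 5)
Your proof is correct, and it is essentially the mirror image of the one in the paper. You decompose the exact complex $\mathbf{P}$ into its cycles $Z_m$, translate exactness of $\Hom_{\A}(\mathbf{P},Y)$ into the vanishing of $\Ext^1_{\A}(Z_m,Y)$ for all $m$, and then dimension-shift along $\mathbf{P}$ itself, using its unboundedness to push the computation $d=\id(\Y)$ steps up to where $\Ext^{d+1}_{\A}(-,Y)$ vanishes. The paper instead fixes $Y$ and inducts down a finite injective coresolution $0\to Y\to I^0\to\cdots\to I^m\to 0$: projectivity of the terms of $\mathbf{P}$ makes $0\to\Hom_{\A}(\mathbf{P},K^j)\to\Hom_{\A}(\mathbf{P},I^j)\to\Hom_{\A}(\mathbf{P},K^{j+1})\to 0$ an exact sequence of complexes whose middle term is acyclic (since $I^j$ is injective and $\mathbf{P}$ is exact), so acyclicity propagates from $K^m=I^m$ down to $K^0=Y$. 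In short, the paper shifts in the second variable (resolving $Y$ by injectives) while you shift in the first (the tail of $\mathbf{P}$ resolves each cycle by projectives); both hinge on the same two facts, namely that $\Ext^{>0}_{\A}(P,-)=0$ for $P$ projective and that $\Ext^{>d}_{\A}(-,Y)=0$. Your version trades the paper's complex-level induction for a diagram chase identifying kernels and images in $\Hom_{\A}(\mathbf{P},Y)$ with $\Hom$'s and $\Ext^1$'s of cycles, and it uses the enough-injectives hypothesis only through the identification of $\id(Y)\leq d$ with the vanishing of $\Ext^{>d}_{\A}(-,Y)$. The only blemish is a harmless off-by-one: with your indexing it is $\Ext^1_{\A}(Z_{n-2},Y)=0$ (surjectivity of $\Hom_{\A}(P_{n-1},Y)\to\Hom_{\A}(Z_{n-1},Y)$) that gives exactness at $\Hom_{\A}(P_n,Y)$, not $\Ext^1_{\A}(Z_{n-1},Y)=0$; this is immaterial since you prove the vanishing for every cycle.
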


\begin{proof}
Let $ m:=\id (\Y) < \infty $ and $\mathbf{P} : \cdots \to P_1 \to P_0 \to P^0 \to P^1 \to \cdots$ an exact complex of projective objets in $\A$. Take $Y \in \Y$. Since $\A$ has enough injective objects, $\id (Y) = \coresdim _{\Inj (\A)} (Y)$. Thus there is an exact sequence 
$$0 \to Y \to I^0  \to \cdots \to I^m \to 0,$$
 with $I^j \in \Inj (\A)$ for all $j \in \{ 0,\dots ,m\}$. Set $K^0 := Y$, $K^m := I^m$ and $K^j := \Ker (I^j \to I^{j+1}) $ for all $j \in \{1, \dots , m-1\}$, this induces for each $j \geq 0$ an exact sequence of complexes  
\[0 \to \Hom _{\A} (\mathbf{P}, K^j) \to \Hom _{\A} (\mathbf{P}, I^j) \to \Hom _{\A} (\mathbf{P}, K^{j+1}) \to 0. \]
Since $K^m = I^m$, $\Hom _{\A} (\mathbf{P}, K^{m-1}) $ is exact, and so iterating of this way we conclude that $\Hom _{\A} (\mathbf{P}, Y) $ is exact. 
\end{proof}

\begin{pro} \label{ikenaga}
Let $(\X, \Y)$ be a GP-admissible pair in $\A$, with enough projective and injective objects.  The following statements are true.
\begin{itemize}
\item[(i)] $\glGPD_{(\X,\Y)}(\A)\leq n <\infty$ if and only if $\pd (\Inj (\A))\leq n $ and $ \id (\Y) < \infty$.
\item[(ii)] $\glGPD_{(\X,\Y)}(\A)=  n $ if and only if $\pd (\Inj (\A))\leq n \leq  \id (\Y) < \infty$.
\end{itemize}
\end{pro}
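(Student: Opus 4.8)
The plan is to prove both parts by combining the inequalities already established in Theorem~\ref{Thm4.22} with a careful analysis of when $\GP_{(\X,\Y)}^{\gorro}=\A$, using Lemma~\ref{complejoexacto} to supply the one new ingredient. First I would prove (i). For the forward direction, assume $\glGPD_{(\X,\Y)}(\A)\leq n<\infty$. Then $\pd(\Inj(\A))\leq n$ is immediate from Theorem~\ref{Thm4.22}(i). For $\id(\Y)<\infty$, note that $\glGPD_{(\X,\Y)}(\A)<\infty$ forces $\GP_{(\X,\Y)}^{\gorro}=\A$ (every object has finite $(\X,\Y)$-Gorenstein projective dimension, hence finite $\GP_{(\X,\Y)}$-resolution dimension), so Theorem~\ref{Thm4.22}(ii) applies and gives $\id(\Y)=\glGPD_{(\X,\Y)}(\A)\leq n<\infty$.

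For the converse in (i), suppose $\pd(\Inj(\A))\leq n$ and $m:=\id(\Y)<\infty$. The key point is to show $\Gpd_{(\X,\Y)}(A)\leq n$ for every $A\in\A$. Since $\A$ has enough projectives, take a projective resolution $\cdots\to P_1\to P_0\to A\to 0$ and let $K$ be the $n$-th syzygy, so there is an exact sequence $0\to K\to P_{n-1}\to\cdots\to P_0\to A\to 0$ with all $P_i$ projective. It suffices to show $K\in\GP_{(\X,\Y)}$. To build the required complete resolution, I would splice the (truncated) projective resolution of $K$ on the left with a projective coresolution on the right: using $\pd(\Inj(\A))\leq n$ together with the existence of enough injectives, one produces, for $K$, an exact sequence $0\to K\to P^0\to P^1\to\cdots$ with all $P^j$ projective (this is the standard consequence of $\pd(\Inj(\A))$ being finite — cf.\ the proof of \cite[Lemma 2.13]{MS} invoked in Theorem~\ref{Thm4.22}(i)). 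Splicing gives a totally exact complex $\mathbf{P}$ of projectives having $K$ as a cycle; since $\Proj(\A)=\X\cap\Y\subseteq\X$, its terms lie in $\X$. By Lemma~\ref{complejoexacto}, since $\id(\Y)=m<\infty$, the complex $\Hom_{\A}(\mathbf{P},Y)$ is exact for all $Y\in\Y$, so $K\in\GP_{(\X,\Y)}$ by definition. Hence $\Gpd_{(\X,\Y)}(A)\leq n$, and taking the supremum over $A$ yields $\glGPD_{(\X,\Y)}(\A)\leq n$.

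For (ii), the equivalence follows formally from (i) once the endpoint inequalities are pinned down. If $\glGPD_{(\X,\Y)}(\A)=n$, then by (i) (applied with the value $n$) we get $\pd(\Inj(\A))\leq n$ and $\id(\Y)<\infty$; moreover, as noted above, $n<\infty$ forces $\GP_{(\X,\Y)}^{\gorro}=\A$, so Theorem~\ref{Thm4.22}(ii) gives $\id(\Y)=n$, hence $\pd(\Inj(\A))\leq n\leq\id(\Y)=n<\infty$, which is the claimed chain (with equality in the second place). Conversely, if $\pd(\Inj(\A))\leq n\leq\id(\Y)<\infty$, then by (i) we have $\glGPD_{(\X,\Y)}(\A)\leq n$; in particular $\glGPD_{(\X,\Y)}(\A)<\infty$, so $\GP_{(\X,\Y)}^{\gorro}=\A$ and Theorem~\ref{Thm4.22}(ii) yields $\glGPD_{(\X,\Y)}(\A)=\id(\Y)\geq n$, so $\glGPD_{(\X,\Y)}(\A)=n$.

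The main obstacle I anticipate is the converse direction of (i): specifically, verifying cleanly that $\pd(\Inj(\A))\leq n$ really does produce a projective \emph{co}resolution of the $n$-th syzygy $K$ that is exact to the right (so that splicing yields a two-sided totally exact complex). This is the place where one needs enough injectives and a dimension-shifting argument analogous to \cite[Lemma 2.13]{MS}; everything else is bookkeeping with the already-proven Theorem~\ref{Thm4.22} and Lemma~\ref{complejoexacto}, plus the routine observation that finiteness of the global dimension is equivalent to $\GP_{(\X,\Y)}^{\gorro}=\A$.
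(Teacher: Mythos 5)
Your proof is correct and follows essentially the same route as the paper: both directions of (i) come down to Theorem \ref{Thm4.22} together with building a totally exact complex of projectives around an $n$-th syzygy and invoking Lemma \ref{complejoexacto} for $\Hom_{\A}(-,\Y)$-exactness, and your part (ii) is the same formal bookkeeping the paper does. The one step you defer as ``standard'' --- that $\pd(\Inj(\A))\leq n$ produces an exact projective coresolution $0\to K\to P^0\to P^1\to\cdots$ of the $n$-th syzygy $K$ --- is exactly where the paper does its real work (via a truncated Cartan--Eilenberg resolution of an injective coresolution of $A$; equivalently, iterate the horseshoe lemma on $0\to A\to I^0\to A'\to 0$ and use that the $n$-th syzygy of each injective is projective because $\pd(I^j)\leq n$), and your pointer to \cite[Lemma 2.13]{MS} is not the right reference for it, since that lemma only gives $\pd(\Inj(\A))=\pd(\Inj(\A)^{\cogorro})$.
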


\begin{proof}
(i) $(\Leftarrow)$.  Assume that $\pd (\Inj (\A))\leq n$ and $\id (\Y) < \infty $. Take $A \in \A$. We will show that $A \in (\GP_{(\X, \Y)} )^{\gorro} _n$. Since $\A$ have enough injective objects, we can construct an exact sequence  
$$0 \to A \to I^0 \to I^1 \to \cdots ,$$
 with $I^j  \in \Inj (\A)$  for all $j \geq 0$. Also, since $\A$ have enough projective objects, $\pd (I^j) = \resdim _{\Proj (\A)} (I^j)$, thus from the inequality  $\pd (\Inj (\A))\leq n$ and \cite[Ch. XVII, \S 1 Proposition 1.3]{Cartan} we can construct the following commutative diagram
$$\xymatrix{  
 0\ar[r] & Q_{_{}} \ar@{^{(}->}[d] \ar@{^{(}->}[r] &  P^0 _{n_{}}  \ar@{^{(}->}[d] \ar[r]& P^1 _{n_{}}  \ar@{^{(}->}[d] \ar[r] & \cdots \\
0 \ar[r]& Q_{n-1} \ar[r] \ar[d] & P^0 _{n-1} \ar[d]  \ar[r]&  P^1 _{n-1} \ar[d]  \ar[r] & \cdots  \\
    &  \vdots  \ar[d] & \vdots  \ar[d] & \vdots \ar[d] & \\
 0\ar[r] & Q_0 \ar@{>>}[d] \ar[r] & P^0 _0  \ar@{>>}[d] \ar[r]& P^1 _0  \ar@{>>}[d] \ar[r] & \cdots \\
0 \ar[r]& A \ar@{^{(}->}[r]  & I^0   \ar[r] &  I^1 \ar[r]  & \cdots  }$$
where  $P_i ^j \in \Proj (\A)$ for all $i \in \{0,1, \cdots , n\}$ and $j\geq 0$. With  $Q _i := \Ker (P^0 _i \to P^1 _i)\in \Proj (\A) $ for all $i \in \{0 , \cdots n-1\}$ and 
\begin{center}
$0 \to Q \to P_n ^0 \to P_n ^1 \to \cdots$\\
 $0 \to Q \to Q_{n-1} \to \cdots \to Q_0 \to A \to 0$
 \end{center}
  exact complexes. We know by Lemma \ref{complejoexacto} that every exact complex of projective objects is $\Hom _{\A} (-,\Y)$-exact. Since $\A$ have enough projective objects and $\Proj (\A) \subseteq \X$,  then  $Q $ possesses a complete $(\X, \Y)$-resolution (see \cite[Definition 3.2]{BMS}), that is $Q\in \GP_{(\X, \Y)}$,  and therefore $\Gpd (A) \leq n$. 
  
 (i) $(\Rightarrow)$ Follows from Theorem \ref{Thm4.22}.
 
 (ii) $(\Rightarrow)$ Assume $ \glGPD_{(\X,\Y)}(\A)=n < \infty$. From (i) only need prove that $n \leq \id (\Y)$, but  from Theorem \ref{Thm4.22} (ii) $\id (\Y) = \glGPD_{(\X,\Y)}(\A) = n$.
 
 $(\Leftarrow)$  If $\pd (\Inj (\A))\leq n \leq  \id (\Y) < \infty$, then from (i) we have the inequality  $ \glGPD_{(\X,\Y)}(\A)\leq n$. Thus from Theorem \ref{Thm4.22} (ii) we have $\id (\Y) = \glGPD_{(\X,\Y)}(\A)$ i.e $n \leq \glGPD_{(\X,\Y)}(\A)$.
\end{proof}

\begin{cor}
Let $(\X, \Y)$ be a GP-admissible pair in  $\A$, with enough projective and injective objects. If  $\pd (\Inj (\A)) =n $ and $\id (\Y) < \infty$ then  $ \glGPD_{(\X,\Y)}(\A)=n$.
\end{cor}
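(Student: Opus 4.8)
The plan is to reduce the corollary directly to Proposition \ref{ikenaga}. We are given that $\pd(\Inj(\A)) = n$ and $\id(\Y) < \infty$, and we want $\glGPD_{(\X,\Y)}(\A) = n$. First I would invoke part (i) of Proposition \ref{ikenaga}: since $\pd(\Inj(\A)) = n$ gives in particular $\pd(\Inj(\A)) \leq n$, and $\id(\Y) < \infty$ by hypothesis, the equivalence in (i) yields $\glGPD_{(\X,\Y)}(\A) \leq n < \infty$. In particular the global dimension is finite, which is exactly the regime where part (ii) applies.

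Next I would apply part (ii) of Proposition \ref{ikenaga}. Having just established that $m := \glGPD_{(\X,\Y)}(\A)$ is finite, the statement $\glGPD_{(\X,\Y)}(\A) = m$ is equivalent (by (ii), read with $n$ replaced by $m$) to $\pd(\Inj(\A)) \leq m \leq \id(\Y) < \infty$. In particular we get $\pd(\Inj(\A)) \leq m$. Combined with $m \leq n$ from the previous paragraph and with the hypothesis $\pd(\Inj(\A)) = n$, this forces $n = \pd(\Inj(\A)) \leq m \leq n$, hence $m = n$, i.e. $\glGPD_{(\X,\Y)}(\A) = n$. Alternatively, and perhaps more cleanly, once $\glGPD_{(\X,\Y)}(\A) \leq n < \infty$ is known one can directly use Theorem \ref{Thm4.22}(i), which gives $n = \pd(\Inj(\A)) \leq \glGPD_{(\X,\Y)}(\A)$, and the two inequalities sandwich the value to $n$.

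There is essentially no obstacle here: the corollary is a one-line consequence of the proposition it follows, and the only thing to be careful about is the logical bookkeeping — namely that part (ii) of Proposition \ref{ikenaga} is an ``if and only if'' whose left-hand side presupposes the finiteness one must first extract from part (i) (or equivalently, that Theorem \ref{Thm4.22}(i) supplies the reverse inequality $\pd(\Inj(\A)) \leq \glGPD_{(\X,\Y)}(\A)$ unconditionally). So the write-up is just: apply (i) to get $\glGPD_{(\X,\Y)}(\A) \leq n$, then use $\pd(\Inj(\A)) = n \leq \glGPD_{(\X,\Y)}(\A)$ from Theorem \ref{Thm4.22}(i) to conclude equality.
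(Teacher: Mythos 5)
Your proof is correct and, in its final ``cleaner'' form, is exactly the paper's argument: Proposition \ref{ikenaga}(i) gives $\glGPD_{(\X,\Y)}(\A)\leq n$, and Theorem \ref{Thm4.22}(i) gives $n=\pd(\Inj(\A))\leq \glGPD_{(\X,\Y)}(\A)$, sandwiching the value. The detour through part (ii) in your middle paragraph also works but is unnecessary, as you yourself note.
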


\begin{proof}
From Proposition \ref{ikenaga} (i) we have $\glGPD_{(\X,\Y)}(\A) \leq n$. Now from \cite[Lemma 2.13]{MS} and Theorem \ref{Thm4.22} (i) we have $$n = \pd (\Inj (\A)) = \pd (\Inj (\A) ^{\cogorro}) \leq \glGPD_{(\X,\Y)}(\A).$$
\end{proof}
We will give some applications of the previous results. 
Recall that a ring $R$ is called $n$\textbf{-Gorenstein} if $R$ is a left and right Noetherian with $\id (_RR) \leq n$ and $\id (R_R) \leq n$, for a non-negative integer $n$.
\begin{pro}\label{n-Gorenstein}
Let $R$ be any $n$-Gorenstein ring with $\id (\Flat (R)) < \infty$. Then,  the left global Ding projective dimension is less or equal than $n$,  and coincides with $\id (_RR)$.
\end{pro}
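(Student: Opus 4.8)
The plan is to identify the left global Ding projective dimension with $\glGPD_{(\X,\Y)}(\A)$ for the GP-admissible pair $(\X,\Y) = (\Proj(R),\Flat(R))$ over $\A = \Modu(R)$, recalling from Example \ref{Ejem}(1) that $\GP_{(\Proj(R),\Flat(R))} = \DP(R)$ and hence $\glGPD_{(\Proj(R),\Flat(R))}(\Modu(R))$ is precisely the left global Ding projective dimension of $R$. The module category $\Modu(R)$ has enough projectives and injectives, so Proposition \ref{ikenaga} applies. The point is to compute the two invariants appearing there: $\pd(\Inj(R))$ and $\id(\Flat(R))$.

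First I would handle $\id(\Flat(R))$: this is finite by hypothesis, so the hypotheses of Proposition \ref{ikenaga}(i),(ii) are half satisfied, and it remains to pin down $\pd(\Inj(R))$. Since $R$ is $n$-Gorenstein, it is two-sided Noetherian with $\id({}_RR)\le n$ and $\id(R_R)\le n$; a standard fact (essentially Iwanaga's characterization of Gorenstein rings) is that over such a ring the classes of modules of finite projective dimension and finite injective dimension coincide and all these dimensions are bounded by $n$. In particular every injective left $R$-module has projective dimension at most $n$, so $\pd(\Inj(R)) \le n$. Combined with $\id(\Flat(R)) < \infty$, Proposition \ref{ikenaga}(i) gives $\glGPD_{(\Proj(R),\Flat(R))}(\Modu(R)) \le n$, which is the first assertion.

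For the equality with $\id({}_RR)$, I would invoke Proposition \ref{ikenaga}(ii): once we know the global Ding projective dimension is finite (which we just showed), it equals $n'$ where $n'$ is characterized by $\pd(\Inj(R)) \le n' \le \id(\Flat(R)) < \infty$; more usefully, Theorem \ref{Thm4.22}(ii) identifies $\glGPD_{(\X,\Y)}(\A)$ with $\id(\Y) = \id(\Flat(R))$ provided $\GP_{(\X,\Y)}^{\gorro} = \A$ — and the finiteness of the global dimension, established above, yields exactly $\GP_{(\Proj(R),\Flat(R))}^{\gorro} = \Modu(R)$. Hence the left global Ding projective dimension equals $\id(\Flat(R))$. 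Finally, since ${}_RR$ is flat, $\id({}_RR) \le \id(\Flat(R))$; conversely, over a Noetherian ring every flat module is a direct limit of finitely generated projective (hence free) modules, and injective dimension of a direct limit is controlled by $\id$ of the terms (here using that $R$ is Noetherian so injective dimension behaves well under direct limits, or more simply that $\id(\Flat(R)) = \id({}_RR)$ is a known identity over Noetherian rings), giving $\id(\Flat(R)) \le \id({}_RR)$. Therefore $\glGPD_{(\Proj(R),\Flat(R))}(\Modu(R)) = \id({}_RR)$, completing the proof.

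\emph{Main obstacle.} The delicate point is the identity $\id(\Flat(R)) = \id({}_RR)$ over a Noetherian ring — specifically the inequality $\id(\Flat(R)) \le \id({}_RR)$, which requires knowing that over a left Noetherian ring the injective dimension of an arbitrary flat module is bounded by $\id({}_RR)$. This follows since flat modules over a Noetherian ring are filtered colimits of finitely generated free modules and, over a Noetherian ring, a colimit (or even an arbitrary direct sum) of modules of injective dimension $\le m$ again has injective dimension $\le m$; one then only needs $\id(R^{(\kappa)}) \le \id({}_RR)$, which is immediate. All remaining steps are bookkeeping: translating the Ding projective language into the $(\Proj,\Flat)$ pair, and quoting Proposition \ref{ikenaga} and Theorem \ref{Thm4.22}. \caj
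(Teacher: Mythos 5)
Your proposal is correct and follows essentially the same route as the paper: both reduce to Proposition \ref{ikenaga}(i) and Theorem \ref{Thm4.22}(ii) for the GP-admissible pair $(\Proj(R),\Flat(R))$, with the Gorenstein hypothesis supplying $\pd(\Inj(R))\le n$ (the paper quotes Ding--Chen's identity $\id({}_RR)=\id(\Proj(R))=\pd(\Inj(R))$ where you quote Iwanaga's characterization). The only divergence is at the final step: Theorem \ref{Thm4.22}(ii) already yields $\glGPD_{(\X,\Y)}(\A)=\id(\Proj(R))$, which equals $\id({}_RR)$ simply because injective dimension over a Noetherian ring is preserved by direct sums and summands, so your Lazard-type argument for $\id(\Flat(R))=\id({}_RR)$, while correct, is more machinery than is needed.
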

\begin{proof}
From \cite[Theorem 3.19]{DingChen} we have that $ \id (_RR) =\id (\Proj (R)) =   \pd (\Inj (R))$. Thus the result it  follows from Proposition \ref{ikenaga} (i) and Theorem \ref{Thm4.22} (ii), for the GP-admissible pair $(\Proj(R), \Flat (R))$.
\end{proof}
The \textbf{FP-injective dimension of} $M$, denoted $\mathrm{FP-id} (M)$ \cite{Strom}, is defined to be the smallest integer $n \geq 0 $ such that $\Ext ^{n+1} _R (N,M) =0$ for all finitely presented $R$-modules $N$. If no such $n$ exists, then we set $\mathrm{FP-id} (M) := \infty$. A ring $R$ is called \textbf{Ding-Chen}  \cite{Gill10}, when is two-sided coherent with $\mathrm{FP-id} (_R R) < \infty$ and $\mathrm{FP-id} (R _R) < \infty$. To simplify the notation we will write $\mathrm{gl.DP}(R)$ for $\mathrm{glGPD}_{(\Proj (R), \Flat (R) )} (R)$ and $\mathrm{gl.DI}(R)$ for $\mathrm{glGID}_{(\mathcal{FP}\Inj (R), \Inj (R) )}(R)$. When $ R$ is Ding-Chen, it was proved in \cite[Corollary 3.5]{DingLi} that $\mathrm{gl.DP} (R) < \infty$ if and only if $\id (\Flat (R)) < \infty$ (see \cite[\S 3.1]{Wang}).  
\begin{pro}\label{DingGlobal}
Let $R$ be a  ring. Then,  $\id (\Flat (R)) < \infty$ and $\pd (\Inj(R)) < \infty$, if and only if $\mathrm{gl.DP} (R)< \infty$.
\end{pro}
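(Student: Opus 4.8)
The statement is a biconditional, and the natural strategy is to invoke Proposition \ref{ikenaga} (i) with the GP-admissible pair $(\Proj(R), \Flat(R))$, so that $\mathrm{gl.DP}(R) = \glGPD_{(\Proj(R),\Flat(R))}(\Modu(R))$. Note $\Modu(R)$ has enough projective and injective objects, so Proposition \ref{ikenaga} applies. With that identification, the equivalence we must establish becomes: $\pd(\Inj(R)) < \infty$ and $\id(\Flat(R)) < \infty$ if and only if there exists $n$ with $\pd(\Inj(R)) \leq n$ and $\id(\Flat(R)) < \infty$. The forward direction is then immediate: if both dimensions are finite, pick $n := \pd(\Inj(R))$ and Proposition \ref{ikenaga} (i) gives $\mathrm{gl.DP}(R) \leq n < \infty$.

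For the reverse direction, assume $\mathrm{gl.DP}(R) = n < \infty$. By Proposition \ref{ikenaga} (ii) (or by the $(\Rightarrow)$ half of (i) together with Theorem \ref{Thm4.22}), finiteness of the global Ding projective dimension forces $\pd(\Inj(R)) \leq n < \infty$; and Theorem \ref{Thm4.22} (i) in fact gives $\pd(\Inj(R)) = \pd(\Inj(R)^{\cogorro}) \leq \mathrm{gl.DP}(R)$, so $\pd(\Inj(R))$ is finite. The remaining point is that $\id(\Flat(R)) < \infty$. Here I would use that $\GP_{(\Proj(R),\Flat(R))} \ortogonal \Flat(R)$ (the Ding projectives are orthogonal to the second class of the admissible pair, since $\X \ortogonal \Y$), so that for any flat module $F$ and any $R$-module $A$ with $\Gpd_{(\Proj(R),\Flat(R))}(A) \leq n$, dimension shifting along a $\GP$-resolution of $A$ yields $\Ext^{>n}_R(A,F) = 0$; taking $A$ to range over all of $\Modu(R)$ gives $\id(F) \leq n$, hence $\id(\Flat(R)) \leq n < \infty$. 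This is exactly the argument used at the end of the proof of Theorem \ref{Thm4.22} (ii), so I would simply cite that computation rather than repeat it.

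Thus the proof is essentially a two-line bookkeeping argument: both conditions on the right are equivalent, via Proposition \ref{ikenaga}, to finiteness of $\mathrm{gl.DP}(R)$, the only subtlety being that one needs Theorem \ref{Thm4.22} (ii) (or the orthogonality $\GP_{(\Proj(R),\Flat(R))} \ortogonal \Flat(R)$) to extract finiteness of $\id(\Flat(R))$ from finiteness of the global dimension — Proposition \ref{ikenaga} (i) as stated only gives the converse implication. I do not anticipate a genuine obstacle; the one place to be careful is confirming that the hypotheses of Proposition \ref{ikenaga} and Theorem \ref{Thm4.22} (namely GP-admissibility of $(\Proj(R),\Flat(R))$, which is Example \ref{Ejem} (1), and enough projectives/injectives in $\Modu(R)$) are all in force, and that no Ding--Chen hypothesis on $R$ is secretly needed — indeed the statement is made for an arbitrary ring $R$, and the argument above never uses coherence of $R$.
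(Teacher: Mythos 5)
Your proposal is correct and follows the same route as the paper, whose entire proof is the one-line application of Proposition \ref{ikenaga}~(i) to the GP-admissible pair $(\Proj(R),\Flat(R))$. The extra care you take in the reverse direction is harmless but redundant: Proposition \ref{ikenaga}~(i) is already stated as a biconditional, so its $(\Rightarrow)$ half directly yields both $\pd(\Inj(R))\leq n$ and $\id(\Flat(R))<\infty$ from $\mathrm{gl.DP}(R)\leq n<\infty$, with no separate appeal to Theorem \ref{Thm4.22}~(ii) needed.
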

\begin{proof}
Apply Proposition \ref{ikenaga} (i) to the GP-admissible pair $(\Proj (R), \Flat(R))$.
\end{proof}
By \cite[Theorem 3.11]{Yang}, over a Ding-Chen ring $R$, we get the equality $\mathrm{gl.DP} (R)= \mathrm{gl.DI}(R)$ if any of them is finite. We denote these quantities by $\mathrm{gl.D.dim}(R)$.  Thus, when $R$ is Ding-Chen, $\id(\Flat (R)) < \infty$ if and only if $\mathrm{gl.D.dim}(R)< \infty$. 

\begin{pro}
Let R be a Ding-Chen ring. Then $\mathrm{gl.D.dim}(R) < \infty$ if and only if $\id (\Proj (R))$ and $\pd (\mathcal{FP}\Inj (R)) < \infty$.
 \end{pro}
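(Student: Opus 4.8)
The plan is to apply Proposition \ref{ikenaga} to the GI-admissible pair $(\mathcal{FP}\Inj(R), \Inj(R))$, which requires dualizing that result. Since $R$ is Ding-Chen, it is in particular two-sided coherent, so $\Modu(R)$ has enough projective and injective objects, and the dual of Proposition \ref{ikenaga} (i) gives: $\mathrm{gl.DI}(R) = \glGID_{(\mathcal{FP}\Inj(R),\Inj(R))}(R) < \infty$ if and only if $\id(\Proj(R)) = \id(\mathrm{Inj}(\A)^{\wedge}) \leq n$ for some $n$ and $\pd(\mathcal{FP}\Inj(R)) < \infty$. Here the roles are swapped relative to the stated Proposition \ref{ikenaga}: injective dimension of $\Proj(R)$ plays the role of $\pd(\Inj(\A))$, and projective dimension of the testing class $\mathcal{FP}\Inj(R)$ plays the role of $\id(\Y)$.

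First I would record that, over a Ding-Chen ring, the paragraph preceding the statement already established $\mathrm{gl.DP}(R) = \mathrm{gl.DI}(R) =: \mathrm{gl.D.dim}(R)$ whenever either side is finite, via \cite[Theorem 3.11]{Yang}. So it suffices to characterize the finiteness of $\mathrm{gl.DI}(R)$. Next I would invoke the dual of Proposition \ref{ikenaga} (i) applied to $(\Z,\W) = (\mathcal{FP}\Inj(R), \Inj(R))$, paired with the fact, recorded in Example \ref{Ejem}, that this is a GI-admissible pair with $\Z \cap \W = \Inj(\A)$. This immediately yields that $\mathrm{gl.DI}(R) < \infty$ if and only if $\id(\Proj(R)) < \infty$ and $\pd(\mathcal{FP}\Inj(R)) < \infty$, which is exactly the claimed equivalence once we substitute $\mathrm{gl.D.dim}(R)$ for $\mathrm{gl.DI}(R)$.

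The main obstacle — really the only nontrivial point — is justifying that the dual of Proposition \ref{ikenaga} applies and that its dual statement reads as I have claimed. One must check that the hypothesis "enough projective and injective objects" is symmetric (it is, for module categories over a coherent ring), and that the dual of Proinition \ref{ikenaga} (i) genuinely exchanges the role of $\pd(\Inj(\A))$ with $\id(\Proj(\A))$ and of $\id(\Y)$ with $\pd(\W')$ for the GI-admissible testing class. This is a purely formal duality in $\A^{\mathrm{op}}$: a GI-admissible pair in $\A$ is a GP-admissible pair in $\A^{\mathrm{op}}$, $\mathcal{FP}\Inj(R)$-coresolutions become $\mathcal{FP}\Inj(R)$-resolutions, injective objects become projective, and $\glGID_{(\Z,\W)}(\A) = \glGPD_{(\W,\Z)}(\A^{\mathrm{op}})$. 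I would state this correspondence in one sentence and then read off the conclusion.

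I would write the proof as follows: Since $R$ is Ding-Chen it is two-sided coherent, so $\Modu(R)$ has enough projective and injective objects, and by \cite[Theorem 3.11]{Yang} we have $\mathrm{gl.D.dim}(R) = \mathrm{gl.DI}(R)$ whenever the latter is finite. Applying the dual of Proposition \ref{ikenaga} (i) to the GI-admissible pair $(\mathcal{FP}\Inj(R), \Inj(R))$ gives that $\mathrm{gl.DI}(R)$ is finite if and only if $\id(\Proj(R))$ is finite and $\pd(\mathcal{FP}\Inj(R))$ is finite; combining the two equivalences yields the claim. \caj
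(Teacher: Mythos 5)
Your proof is correct and follows essentially the same route as the paper: reduce to the finiteness of $\mathrm{gl.DI}(R)$ via the Ding-Chen hypothesis and Yang's theorem, then apply the dual of Proposition \ref{ikenaga} (i) to the GI-admissible pair $(\mathcal{FP}\Inj(R), \Inj(R))$. The only blemish is the parenthetical identification $\id(\Proj(R)) = \id(\mathrm{Inj}(\A)^{\wedge})$, which should read $\id(\Proj(\A)^{\wedge})$ under the duality you describe; this does not affect the argument.
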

 \begin{proof}
 Over a Ding-Chen ring, $\mathrm{gl.D.dim}(R)< \infty$ if and only if  $\mathrm{gl.DI} (R)< \infty$. From this, and by the dual of Proposition \ref{ikenaga} (i) applied to the GI-admissible pair $(\mathcal{FP}\Inj (R), \Inj(R))$ we obtain that; $\mathrm{gl.DI} (R)< \infty $ if and only if $\id (\Proj (R))< \infty$ and $\pd (\mathcal{FP}\Inj (R)) < \infty$. 
 \end{proof}
From Example \ref{Ejem} (2), to simplify the notation, we write $\mathrm{gl.GP_{AC}} (R) $ instead of $\mathrm{gl.GPD} _{(\Proj (R), \mathrm{Lev} (R))} (R)$ and $\mathrm{gl.GI_{AC}} (R) $ instead of $\mathrm{gl.GID} _{(\mathrm{AC}(R), \Inj (R))} (R) $.  We will write $\mathrm{gl.G.dim_{AC}} (R)$ to denote when the quantities  $\mathrm{gl.GP_{AC}} (R)$, $\mathrm{gl.GI_{AC}} (R) $ are the same.

\begin{pro}\label{ACgorenstein}
Let $R$ be an arbitrary ring and $n$ a non-negative integer. The following statements are true.
\begin{itemize}
\item[(i)]  $\mathrm{gl.GP_{AC}} (R) \leq n < \infty$ if and only if $\pd (\Inj(R)) \leq n$ and $\id (\mathrm{Lev}(R)) <\infty$.
\item[(ii)] $\mathrm{gl.GI_{AC}} (R) \leq n < \infty$ if and only if $\id (\Proj(R)) \leq n$ and $\pd (\mathrm{AC}(R)) <\infty$.
\item[(iii)] $\mathrm{gl.G.dim_{AC}} (R)  \leq n$ if and only if $\id (\mathrm{Lev}(R)), \pd (\mathrm{AC}(R)) < \infty$, and one of them is less  or equal than $n$.
\item[(iv)] In $\mathrm{(iii)}$, $R$ is an AC-Gorenstein ring in Gillespie's  sense \cite{Gill18}.
\end{itemize}
\end{pro}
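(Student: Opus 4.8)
The statement is Proposition \ref{ACgorenstein}. Items (i) and (ii) are direct translations of Proposition \ref{ikenaga} (i) and its dual: for (i) I would apply Proposition \ref{ikenaga} (i) to the GP-admissible pair $(\Proj(R),\mathrm{Lev}(R))$, noting that $\Modu(R)$ has enough projective and injective objects, so that $\mathrm{gl.GP_{AC}}(R)\le n<\infty$ holds if and only if $\pd(\Inj(R))\le n$ and $\id(\mathrm{Lev}(R))<\infty$; similarly (ii) comes from the dual of Proposition \ref{ikenaga} (i) applied to the GI-admissible pair $(\mathrm{AC}(R),\Inj(R))$, giving the criterion $\id(\Proj(R))\le n$ and $\pd(\mathrm{AC}(R))<\infty$. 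The only point requiring a word is that $(\Proj(R),\mathrm{Lev}(R))$ and $(\mathrm{AC}(R),\Inj(R))$ are admissible with the standing intersection conditions $\Proj(R)=\Proj(R)\cap\mathrm{Lev}(R)$ and $\Inj(R)=\mathrm{AC}(R)\cap\Inj(R)$, which are recorded in Example \ref{Ejem} (2) and are part of the standing hypotheses.

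For (iii), I would first reduce $\mathrm{gl.G.dim_{AC}}(R)\le n$ to the simultaneous finiteness of $\mathrm{gl.GP_{AC}}(R)$ and $\mathrm{gl.GI_{AC}}(R)$ with one of them $\le n$, which is exactly the condition under which the notation $\mathrm{gl.G.dim_{AC}}(R)$ is defined. The key observation is that over $\Modu(R)$ one always has $\GP_{(\Proj(R),\mathrm{Lev}(R))}^{\gorro}=\Modu(R)$ precisely when $\mathrm{gl.GP_{AC}}(R)<\infty$, and dually $\GI_{(\mathrm{AC}(R),\Inj(R))}^{\cogorro}=\Modu(R)$ precisely when $\mathrm{gl.GI_{AC}}(R)<\infty$; under these hypotheses Corollary \ref{dimensiongloblal} gives $\mathrm{gl.GP_{AC}}(R)=\mathrm{gl.GI_{AC}}(R)$. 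Combining this with (i) and (ii): finiteness of $\id(\mathrm{Lev}(R))$ together with $\pd(\Inj(R))<\infty$ is equivalent to $\mathrm{gl.GP_{AC}}(R)<\infty$, and finiteness of $\pd(\mathrm{AC}(R))$ together with $\id(\Proj(R))<\infty$ is equivalent to $\mathrm{gl.GI_{AC}}(R)<\infty$; and by Theorem \ref{Thm4.22} (ii) and its dual, when these global dimensions are finite one has $\mathrm{gl.GP_{AC}}(R)=\id(\mathrm{Lev}(R))$ and $\mathrm{gl.GI_{AC}}(R)=\pd(\mathrm{AC}(R))$, whence $\pd(\Inj(R))\le\id(\mathrm{Lev}(R))$ and $\id(\Proj(R))\le\pd(\mathrm{AC}(R))$ follow automatically. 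So the hypotheses $\id(\mathrm{Lev}(R)),\pd(\mathrm{AC}(R))<\infty$ already force $\pd(\Inj(R))$ and $\id(\Proj(R))$ to be finite, and the bound $\le n$ on one of the two dimensions transfers to the common value $\mathrm{gl.G.dim_{AC}}(R)$ via the equality $\mathrm{gl.GP_{AC}}(R)=\mathrm{gl.GI_{AC}}(R)$.

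For (iv), the point is to match our numerical finiteness conditions with Gillespie's definition of an AC-Gorenstein ring from \cite{Gill18}: a ring is AC-Gorenstein when the cotorsion pairs $(\GP_{(\Proj(R),\mathrm{Lev}(R))},\W)$ and $(\W,\GI_{(\mathrm{AC}(R),\Inj(R))})$ (for a suitable thick class $\W$) are complete and form a hereditary cotorsion triple, equivalently when the Gorenstein AC-projective and Gorenstein AC-injective model structures exist and coincide. Under the hypotheses of (iii) we have $\mathrm{gl.GP_{AC}}(R)=\mathrm{gl.GI_{AC}}(R)<\infty$, so $\GP_{(\Proj(R),\mathrm{Lev}(R))}^{\gorro}=\Modu(R)=\GI_{(\mathrm{AC}(R),\Inj(R))}^{\cogorro}$, and the class $\W:=\Inj(R)^{\cogorro}_n$ (with $n=\mathrm{gl.G.dim_{AC}}(R)$) is thick and fits into the cotorsion triple by Proposition \ref{parinyectivo} and its dual together with Lemma \ref{finita}; I would then cite the relevant characterization in \cite{Gill18} to conclude. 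The main obstacle is item (iv): it is essentially a bookkeeping/translation step rather than a computation, but one must be careful to verify that the completeness of both cotorsion pairs is available — this needs the finiteness $\mathrm{gl.G.dim_{AC}}(R)<\infty$ to run the Eklof–Trlifaj style argument behind Proposition \ref{parinyectivo}, and to identify $\W$ correctly so that it coincides with Gillespie's trivial class.
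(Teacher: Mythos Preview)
Your treatment of (i), (ii), and (iv) matches the paper's: (i) and (ii) are direct applications of Proposition \ref{ikenaga} (i) and its dual to the pairs in Example \ref{Ejem} (2), and (iv) is settled in the paper by a bare citation of \cite[Corollary 3.9, Definition 4.1]{Gill18}, so your more elaborate cotorsion-triple discussion is correct but unnecessary.

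The gap is in your argument for (iii), specifically the $(\Leftarrow)$ direction. You attempt to deduce $\pd(\Inj(R))<\infty$ and $\id(\Proj(R))<\infty$ from the hypotheses $\id(\mathrm{Lev}(R)),\pd(\mathrm{AC}(R))<\infty$ by invoking Theorem \ref{Thm4.22} (ii) and the equalities $\mathrm{gl.GP_{AC}}(R)=\id(\mathrm{Lev}(R))$, $\mathrm{gl.GI_{AC}}(R)=\pd(\mathrm{AC}(R))$. But Theorem \ref{Thm4.22} (ii) requires $\GP_{(\X,\Y)}^{\gorro}=\A$, i.e.\ $\mathrm{gl.GP_{AC}}(R)<\infty$, and by (i) that in turn requires $\pd(\Inj(R))<\infty$ --- exactly what you are trying to conclude. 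So the argument is circular as written. The paper breaks this circularity with the elementary containments $\Inj(R)\subseteq\mathrm{AC}(R)$ and $\Proj(R)\subseteq\mathrm{Lev}(R)$, which immediately give $\pd(\Inj(R))\le\pd(\mathrm{AC}(R))<\infty$ and $\id(\Proj(R))\le\id(\mathrm{Lev}(R))<\infty$; only then can (i), (ii), Theorem \ref{Thm4.22} (ii), and Corollary \ref{dimensiongloblal} be combined to obtain $\id(\mathrm{Lev}(R))=\mathrm{gl.GP_{AC}}(R)=\mathrm{gl.GI_{AC}}(R)=\pd(\mathrm{AC}(R))$ and the bound by $n$. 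Insert that containment step and your plan goes through.
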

\begin{proof}
\begin{itemize}
\item[(i)] Apply Proposition \ref{ikenaga} (i)  to the GP-admissible pair $(\Proj (R), \mathrm{Lev} (R))$.

\item[(ii)] Follows from the dual of Proposition \ref{ikenaga} (i) applied to the GI-admissible pair $(\mathrm{AC} (R), \Inj (R))$.

\item[(iii)] ($\Leftarrow$) Assume that $\id (\mathrm{Lev}(R)) < \infty$ and $\pd (\mathrm{AC}(R)) < \infty$. The containments  $\Proj (R) \subseteq \mathrm{Lev} (R)$, $\Inj (R) \subseteq \mathrm{AC} (R)$ give us  that 
$$\id (\Proj (R)) \leq \id(\mathrm{Lev} (R)) < \infty \mbox{ and } \pd (\Inj (R)) \leq \pd (\mathrm{AC} (R)) < \infty.$$
 From (i) and (ii), in particular we have that 
$$\GP _{(\Proj (R), \mathrm{Lev} (R))} ^{\gorro}(R) = \Modu (R) = \GI _{(\mathrm{AC}(R), \Inj (R))} ^{\cogorro} (R),$$
 then by Theorem \ref{Thm4.22} (ii) and Corolary \ref{dimensiongloblal} we have that  
$$\id (\mathrm{Lev}(R)) = \mathrm{gl.GP_{AC}} (R) = \mathrm{gl.GI_{AC}} (R)  =\pd (\mathrm{AC}(R)) . $$
Thus $\mathrm{gl.G.dim}_{\mathrm{AC}} (R) \leq n$, when both of the dimensions  $\id (\mathrm{Lev}(R))$ or  $\pd (\mathrm{AC}(R)) $ is less than or equal to $n$.
  
($\Rightarrow$)  It follows from (i) and (ii).

\item[(iv)] Follows from \cite[Corollary 3.9, Defintion 4.1]{Gill18}.
\end{itemize}
\end{proof}

\begin{pro}\label{cotorsionfinito}
Let $(\X, \Y)$ be a GP-admissible pair in $\A$.  If $\glGPD_{(\X,\Y)}(\A)\leq n <\infty$, then $(\GP _{(\X, \Y)}, \Proj(\A) ^{\gorro}_n)$ is a hereditary and complete cotorsion pair in $\A$.
\end{pro}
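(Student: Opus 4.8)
The plan is to verify the four defining properties of a hereditary complete cotorsion pair directly: the orthogonality $\GP_{(\X,\Y)}\ortogonal\Proj(\A)^{\gorro}_n$ (which supplies hereditariness and two of the four inclusions), the existence of special $\GP_{(\X,\Y)}$-precover and special $\Proj(\A)^{\gorro}_n$-preenvelope sequences, and then the two remaining inclusions $\GP_{(\X,\Y)}={}^{\ortogonal_1}\big(\Proj(\A)^{\gorro}_n\big)$ and $\Proj(\A)^{\gorro}_n=\GP_{(\X,\Y)}^{\ortogonal_1}$. The two tools I would rely on are Theorem~\ref{GPAB4} and the fact, used already in the proof of Lemma~\ref{finita}, that every $G\in\GP_{(\X,\Y)}$ fits into a short exact sequence $0\to G\to P\to G'\to 0$ with $P\in\Proj(\A)$ and $G'\in\GP_{(\X,\Y)}$ (\cite{BMS}).

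First I would establish the orthogonality. Under our standing hypothesis $\Proj(\A)=\X\cap\Y\subseteq\Y$, and $\GP_{(\X,\Y)}\ortogonal\Y$ (as recalled in the proof of Theorem~\ref{Thm4.22}(ii)); hence $\Ext^{i}_{\A}(G,P)=0$ for all $i>0$, $G\in\GP_{(\X,\Y)}$, $P\in\Proj(\A)$. For $Q\in\Proj(\A)^{\gorro}_n$, choosing a projective resolution $0\to P_n\to\cdots\to P_0\to Q\to 0$ and applying dimension shifting yields $\Ext^{i}_{\A}(G,Q)\cong\Ext^{i+n}_{\A}(G,P_n)=0$ for every $i>0$. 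Thus $\GP_{(\X,\Y)}\ortogonal\Proj(\A)^{\gorro}_n$, which gives the inclusions $\GP_{(\X,\Y)}\subseteq{}^{\ortogonal_1}\big(\Proj(\A)^{\gorro}_n\big)$ and $\Proj(\A)^{\gorro}_n\subseteq\GP_{(\X,\Y)}^{\ortogonal_1}$, and the hereditary property.

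Next I would produce the two approximation sequences for an arbitrary $A\in\A$; here the hypothesis $\glGPD_{(\X,\Y)}(\A)\le n$ enters, so $\Gpd_{(\X,\Y)}(A)\le n$. If $\Gpd_{(\X,\Y)}(A)\ge 1$, Theorem~\ref{GPAB4} gives an exact sequence $0\to K\to G\to A\to 0$ with $G\in\GP_{(\X,\Y)}$ and $\pd(K)=\Gpd_{(\X,\Y)}(A)-1\le n-1$, so $K\in\Proj(\A)^{\gorro}_n$: this is the special $\GP_{(\X,\Y)}$-precover (for $\Gpd_{(\X,\Y)}(A)=0$ take $0\to 0\to A\xrightarrow{=}A\to 0$). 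For the special $\Proj(\A)^{\gorro}_n$-preenvelope I would take $0\to G\to P_0\to G_1\to 0$ with $P_0\in\Proj(\A)$, $G_1\in\GP_{(\X,\Y)}$ and form the pushout of $G\hookrightarrow P_0$ along $G\twoheadrightarrow A$, obtaining the exact commutative diagram
$$\xymatrix{
 K\;  \ar@{^{(}->}[r] \ar@{=}[d] & G \ar@{^{(}->}[d] \ar @{} [dr] |{\textbf{po}} \ar@{>>}[r] & A \ar@{^{(}->}[d]  \\
 K\; \ar@{^{(}->}[r] & P_0  \ar@{>>}[d] \ar@{>>}[r]& L  \ar@{>>}[d] \\
  & G_1  \ar@{=}[r]  &  G_1    }$$
(when $\Gpd_{(\X,\Y)}(A)=0$ one uses the embedding of $A$ itself). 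From the middle row $0\to K\to P_0\to L\to 0$ with $P_0$ projective and $\pd(K)\le n-1$ one gets $\pd(L)\le n$, i.e. $L\in\Proj(\A)^{\gorro}_n$, and the right-hand column $0\to A\to L\to G_1\to 0$ is the desired sequence.

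Finally I would close up the cotorsion pair. If $M\in{}^{\ortogonal_1}\big(\Proj(\A)^{\gorro}_n\big)$, applying $\Hom_{\A}(M,-)$ to its special precover $0\to K\to G\to M\to 0$ (with $K\in\Proj(\A)^{\gorro}_n$) shows the sequence splits, so $M$ is a direct summand of $G$ and hence $M\in\GP_{(\X,\Y)}$, the latter class being closed under direct summands (\cite{BMS}); with the second paragraph this gives $\GP_{(\X,\Y)}={}^{\ortogonal_1}\big(\Proj(\A)^{\gorro}_n\big)$. Dually, if $N\in\GP_{(\X,\Y)}^{\ortogonal_1}$, its special preenvelope $0\to N\to L\to G_1\to 0$ splits since $\Ext^1_\A(G_1,N)=0$, so $N$ is a direct summand of $L\in\Proj(\A)^{\gorro}_n$, whence $\pd(N)\le n$ and $\Proj(\A)^{\gorro}_n=\GP_{(\X,\Y)}^{\ortogonal_1}$. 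Together with the two approximation sequences this establishes that $(\GP_{(\X,\Y)},\Proj(\A)^{\gorro}_n)$ is a complete hereditary cotorsion pair. I expect the preenvelope step to be the delicate point: tracking that $\pd(L)\le n$ in the pushout, and, more substantively, confirming that the embedding of $\GP_{(\X,\Y)}$-objects into projectives with $\GP_{(\X,\Y)}$-cokernel is genuinely available in this relative setting; the orthogonality and the splitting arguments are then routine.
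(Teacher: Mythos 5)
Your proposal is correct in substance but takes a genuinely different route from the paper's. The paper's proof is essentially a three-step citation: it identifies $\GP_{(\X,\Y)}$ with $W\GP_{(\Proj(\A),\Y)}$ via \cite[Theorem 3.32]{BMS}, invokes \cite[Theorem 4.22 (g)]{BMS} to obtain that $(\GP_{(\X,\Y)},\Proj(\A)^{\gorro})$ is a hereditary complete cotorsion pair, and then truncates $\Proj(\A)^{\gorro}$ to $\Proj(\A)^{\gorro}_n$ using \cite[Corollary 4.18 (a)]{BMS} together with the global bound. You instead verify the defining conditions by hand, using only Theorem \ref{GPAB4}, the short exact sequence $0\to G\to P\to G'\to 0$ embedding a Gorenstein projective into a projective with Gorenstein projective cokernel (this is the point you flag as delicate, but it is genuinely available: the paper itself uses it in Lemmas \ref{finita} and \ref{desigualdad}, citing \cite[Corollary 3.25]{BMS}), the orthogonality $\GP_{(\X,\Y)}\ortogonal\Y\supseteq\Proj(\A)$ recalled in Theorem \ref{Thm4.22}, and the same pushout that appears in the proof of Lemma \ref{desigualdad}. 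What your approach buys is self-containment and transparency: it makes visible exactly where $\glGPD_{(\X,\Y)}(\A)\le n$ enters (the bound $\pd(K)\le n-1$ on the kernel of the precover, hence $\pd(L)\le n$ in the pushout). What the paper's approach buys is brevity, and the fact that the needed closure properties are already certified in \cite{BMS}.

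Two points in your final step deserve tightening. First, to conclude $M\in\GP_{(\X,\Y)}$ from the split precover you need $\GP_{(\X,\Y)}$ to be closed under direct summands; this does hold for GP-admissible pairs, but it is a nontrivial result of \cite{BMS} and should be cited precisely rather than treated as routine. Second, you pass from ``$N$ is a direct summand of $L\in\Proj(\A)^{\gorro}_n$'' to ``$N\in\Proj(\A)^{\gorro}_n$'' via $\pd(N)\le n$. Since $\Proj(\A)^{\gorro}_n$ is defined by resolution dimension and the proposition does not assume $\A$ has enough projectives, the implication $\pd(N)\le n\Rightarrow\resdim_{\Proj(\A)}(N)\le n$ is exactly what Remark \ref{igualdad} gives only for objects already known to lie in $\Proj(\A)^{\gorro}$; you should therefore either invoke closure of $\Proj(\A)^{\gorro}_n$ under direct summands directly, or note that the right-hand class you are constructing is $\GP_{(\X,\Y)}^{\ortogonal_1}$ and argue the containment the other way. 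Neither issue is fatal, but as written this is where a referee would push.
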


\begin{proof}
Since $(\X, \Y)$ is GP-admissible,  we have  that $W\GP_{(\Proj(\A), \Y)} = \GP _{(\X, \Y)} $ by \cite[Theorem 3.32]{BMS}, and from this we get  $$\glGPD_{(\X,\Y)}(\A) = \glWGPD_{(\Proj(\A),\Y)}(\A).$$ 
Also we have from \cite[Theorem 4.22 (g)]{BMS}, that 
 $$(W\GP _{(\Proj(\A), \Y)}, \Proj(\A) ^{\gorro}) = (\GP _{(\X, \Y)}, \Proj(\A) ^{\gorro})$$
  is a hereditary and complete cotorsion pair. We only need prove that $\Proj(\A) ^{\gorro} = \Proj(\A) ^{\gorro} _n$, but the equality follows from \cite[Corollary 4.18 (a)]{BMS}.
\end{proof}

\begin{pro}\label{prebalance}
Let $(\X, \Y)$ be a GP-admissible pair in  $\A$, with enough projective and injective objects.  Then,  $\glGPD_{(\X,\Y)}(\A) \leq  n <\infty$ if and only if $(\GP _{(\X, \Y)}, \Inj (\A) ^{\cogorro} _n )$ is a hereditary and complete cotorsion pair in $\A$.
\end{pro}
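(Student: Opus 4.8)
The plan is to prove the two implications separately, using Proposition~\ref{cotorsionfinito} for the forward direction and Proposition~\ref{parinyectivo} for the converse. The point that makes these (and several earlier) results available is that, since $\A$ has enough injectives, $(\Inj(\A),\Inj(\A))$ is a GI-admissible pair with $\Inj(\A)\cap\Inj(\A)=\Inj(\A)$; so this pair may play the role of the ``GI-admissible $(\Z,\W)$'' required by Lemmas~\ref{finita} and~\ref{desigualdad}, Theorem~\ref{Thm4.22} and Propositions~\ref{parinyectivo} and~\ref{ikenaga}, as well as by their duals.

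For $(\Leftarrow)$: if $(\GP_{(\X,\Y)},\Inj(\A)^{\cogorro}_n)$ is a hereditary and complete cotorsion pair then, in particular, it is a complete cotorsion pair, and Proposition~\ref{parinyectivo} applied to the GP-admissible pair $(\X,\Y)$ together with the GI-admissible pair $(\Inj(\A),\Inj(\A))$ gives $\glGPD_{(\X,\Y)}(\A)\le n<\infty$ at once. This half is essentially a citation.

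For $(\Rightarrow)$: assume $\glGPD_{(\X,\Y)}(\A)\le n<\infty$. By Proposition~\ref{cotorsionfinito}, $(\GP_{(\X,\Y)},\Proj(\A)^{\gorro}_n)$ is already a hereditary and complete cotorsion pair, so it suffices to show $\Proj(\A)^{\gorro}_n=\Inj(\A)^{\cogorro}_n$, for then the two pairs coincide. Since every object has $\Gpd_{(\X,\Y)}\le n$ we have $\GP_{(\X,\Y)}^{\gorro}=\A$, so Theorem~\ref{Thm4.22}(ii) gives $\id(\Proj(\A))=\glGPD_{(\X,\Y)}(\A)\le n$, while Proposition~\ref{ikenaga}(i) gives $\pd(\Inj(\A))\le n<\infty$. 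Feeding these two inequalities into the dual of Proposition~\ref{ikenaga}(i), applied to the GI-admissible pair $(\Inj(\A),\Inj(\A))$, yields $\glGID_{(\Inj(\A),\Inj(\A))}(\A)\le n$. Now, for $M\in\Inj(\A)^{\cogorro}_n\subseteq\Inj(\A)^{\cogorro}$, Lemma~\ref{desigualdad} gives $\pd(M)\le\Gpd_{(\X,\Y)}(M)\le n$, hence $M\in\Proj(\A)^{\gorro}_n$ (as $\A$ has enough projectives); and for $M\in\Proj(\A)^{\gorro}_n\subseteq\Proj(\A)^{\gorro}$, the dual of Lemma~\ref{desigualdad} gives $\id(M)\le\Gid_{(\Inj(\A),\Inj(\A))}(M)\le\glGID_{(\Inj(\A),\Inj(\A))}(\A)\le n$, hence $M\in\Inj(\A)^{\cogorro}_n$ (as $\A$ has enough injectives). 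Thus $\Proj(\A)^{\gorro}_n=\Inj(\A)^{\cogorro}_n$ and the conclusion follows.

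The main obstacle is precisely this equality $\Proj(\A)^{\gorro}_n=\Inj(\A)^{\cogorro}_n$: it is the step where the ``two-sided'' information is essential --- not just $\pd(\Inj(\A))\le n$ but also $\id(\Proj(\A))\le n$ --- and where one must pass to the injective-side analogues of the earlier results via the auxiliary pair $(\Inj(\A),\Inj(\A))$. The one routine thing to check along the way is that $(\Inj(\A),\Inj(\A))$ genuinely meets the paper's standing conventions for GI-admissible pairs (in particular $\Z\cap\W=\Inj(\A)$), which is immediate once $\A$ has enough injectives.
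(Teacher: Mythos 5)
Your proposal is correct and follows essentially the same route as the paper: the converse is Proposition~\ref{parinyectivo} applied with the GI-admissible pair $(\Inj(\A),\Inj(\A))$, and the forward direction reduces, via Proposition~\ref{cotorsionfinito}, to the equality $\Proj(\A)^{\gorro}_n=\Inj(\A)^{\cogorro}_n$, which you correctly identify as the crux and derive from the two-sided bounds $\pd(\Inj(\A))\le n$ and $\id(\Proj(\A))\le n$. The only (immaterial) difference is in the citations used for the two inclusions: the paper gets them from Theorem~\ref{Thm4.22} together with the dual of \cite[Lemma 2.13]{MS}, whereas you route through Lemma~\ref{desigualdad}, its dual, and the dual of Proposition~\ref{ikenaga}(i).
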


\begin{proof}
$(\Rightarrow)$ Let us suppose that $\glGPD_{(\X,\Y)}(\A) \leq  n <\infty$. From  Proposition \ref{cotorsionfinito} it is enough to see that $\Proj (\A) ^{\gorro} _n = \Inj (\A) ^{\cogorro} _n$. 
Note that the pair $(\Inj(\A), \Inj(\A))$ is GI-admissible in $\A$, since $\A$ have enough injective objects, thus from Theorem  \ref{Thm4.22} (ii) we know that $\pd (\Inj (\A) ^{\cogorro}) \leq n \mbox{ and } \id (\Proj (\A)) \leq n$. By using the dual of \cite[Lemma 2.13]{MS}, we conclude that $ \id (\Proj (\A) ^{\gorro}) \leq n$.

Take $T \in \Proj (\A) ^{\gorro}$, since $\A$ have enough injectives and $\id (\Proj (\A) ^{\gorro}) \leq n$,  $\id (T) = \coresdim_{\Inj (\A)} (T)$, i.e.  $T \in \Inj (\A)^{\cogorro} _n $. This is $\Proj (\A) ^{\gorro} \subseteq \Inj (\A) ^{\cogorro}_n$.
 
Now for $S \in  \Inj (\A)^{\cogorro}$, since $\A$ have enough projectives and $\pd (\Inj (\A) ^{\cogorro}) \leq n$, $\resdim _{\Proj (\A)} (S) = \pd (S) \leq n$ i.e. $S \in \Proj (A) ^{\gorro} _n$. This is $\Inj (\A) ^{\cogorro} \subseteq \Proj (\A) ^{\gorro}_n$.\\
$(\Leftarrow)$ The inequality $\glGPD_{(\X,\Y)}(\A) \leq n $ follows from Proposition  \ref{parinyectivo}, since the pair  $(\Inj(\A), \Inj(\A))$ is GI-admissible.
\end{proof}
 Recall that a class $\X \subseteq  \A$ is called \textbf{precovering} if for each $M \in \A$ there is  a homomorphism $f : X \to M$ such that $\Hom _{\A} (Z, f) : \Hom _{\A} (Z, X) \to \Hom _{\A} (Z,M)$ is surjective for any $Z \in \X$. Dually is defined a \textbf{preenveloping} class. A pair $(\X, \Y)$ of  classes of objects in  $\A$ is called \textbf{balanced} \cite[Defintion 1.1]{Chen} if $\X$ is precovering $\Y$ is preenveloping and for each $M\in \A$ there exist complexes $X^{\bullet} (M):\cdots \to X^1 \to X ^0 \to M \to 0$ and $Y^{\bullet}(M) : 0 \to M \to Y^0 \to Y^1 \to \cdots$, with $X^{i} \in \X$ and $Y^{i} \in \Y$ for all $i \geq 0$, such that $\Hom _{\A} (X^{\bullet} (M), Y)$ and $\Hom _{\A}(X , Y^{\bullet} (M))$ are exact complexes for all $X \in \X$ and $Y \in \Y$. Such balanced pair is called \textbf{admissible} if the homomorphism $f : X^{0} \to M$ is epic. From \cite[Lemma 2.1]{Chen} for all balanced pair $(\X, \Y)$ and $M,N \in \A$, there exist a natural isomorphism $H^n (\Hom _{\A} (X^{\bullet } (M),N)) \cong H^n (\Hom _{\A} (M, Y^{\bullet} (N)))$ between the $n$-th cohomologies. Finally,  for an additive category $\mathfrak{a}$, we denote by $\mathbf{K}(\mathfrak{a})$ the homotopy category of complexes in $\mathfrak{a}$. 
\begin{pro} \label{ETriangulada}
Let  $(\X, \Y)$ be GP-admissible and  $(\Z, \W)$ be GI-admisible in $\A$. If $\A$ has enough injective and projective objects,  $\mathrm{glGPD}_{(\X, \Y )}(\A)\leq  n $ and $ \pd (\Z) < \infty$. Then the following statements are true:
\begin{itemize}
\item[(i)]  $\mathrm{glGPD}_{(\X, \Y )}(\A) = \mathrm{glGID}_{(\Z, \W )}(\A)$,
\item[(ii)]  $(\GP _{(\X, \Y)}, \Inj (\A) ^{\cogorro} _n , \GI _{(\Z, \W)}) $ is a hereditary and complete cotorsion triple,
\item[(iii)] $(\GP _{(\X, \Y)} , \GI _{(\Z, \W)})$ is an admissible balanced pair,
\item[(iv)] there is a triangle equivalence $\mathbf{K}(\GP _{(\X, \Y)})  \cong \mathbf{K} (\GI _{(\Z, \W)})$.
\end{itemize}
\end{pro}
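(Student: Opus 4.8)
The strategy is to assemble the four conclusions from the machinery already developed in the excerpt, handling (i) and (ii) first since (iii) and (iv) are essentially formal consequences. For (i), the hypotheses $\glGPD_{(\X,\Y)}(\A)\le n$ and $\pd(\Z)<\infty$ should feed into Corollary \ref{dimensiongloblal} via Theorem \ref{Thm4.22} and its dual. Concretely, from $\glGPD_{(\X,\Y)}(\A)<\infty$ and Proposition \ref{ikenaga} (or the $(\Rightarrow)$ part of Theorem \ref{Thm4.22}) we get $\GP_{(\X,\Y)}^{\gorro}=\A$ and $\id(\Y)<\infty$; dually, I would argue that $\pd(\Z)<\infty$ together with the GI-admissibility of $(\Z,\W)$ forces $\GI_{(\Z,\W)}^{\cogorro}=\A$ (this is the dual of the implication in Proposition \ref{ikenaga}(i), since $\pd(\Z)\le\pd(\A)$-type bounds and a finiteness hypothesis on $\pd(\Z)$ play the role of the ``$\id(\Y)<\infty$'' hypothesis there). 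Once both $\GP_{(\X,\Y)}^{\gorro}=\A=\GI_{(\Z,\W)}^{\cogorro}$ hold, Corollary \ref{dimensiongloblal} gives (i) directly.

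For (ii), the plan is: by Proposition \ref{prebalance}, $\glGPD_{(\X,\Y)}(\A)\le n<\infty$ already yields that $(\GP_{(\X,\Y)},\Inj(\A)^{\cogorro}_n)$ is a hereditary and complete cotorsion pair. By (i) we have $\glGID_{(\Z,\W)}(\A)\le n<\infty$ as well, so the dual of Proposition \ref{prebalance} gives that $(\Proj(\A)^{\cogorro}_n,\GI_{(\Z,\W)})$ — i.e.\ $({}^{?}\GI_{(\Z,\W)}\text{-side})$ — is a hereditary and complete cotorsion pair. The one identification I must make is that the ``middle'' classes coincide: the right-hand class $\Inj(\A)^{\cogorro}_n$ of the first pair equals the left-hand class of the second. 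In the proof of Proposition \ref{prebalance} it is shown that $\Proj(\A)^{\gorro}_n=\Inj(\A)^{\cogorro}_n$ using enough projectives/injectives and the bounds $\pd(\Inj(\A)^{\cogorro})\le n$, $\id(\Proj(\A)^{\gorro})\le n$; the dual argument identifies the corresponding class for the GI side, and combining them shows the middle term is $\Inj(\A)^{\cogorro}_n$ on both sides. That gives the cotorsion triple, hereditary and complete since both constituent pairs are.

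For (iii), I would invoke the general principle (cf.\ \cite{Chen}) that a complete hereditary cotorsion triple $(\mathcal{U},\mathcal{V},\mathcal{W}')$ yields an admissible balanced pair $(\mathcal{U},\mathcal{W}')$: completeness of $(\GP_{(\X,\Y)},\Inj(\A)^{\cogorro}_n)$ makes $\GP_{(\X,\Y)}$ special precovering (hence precovering) with admissible — i.e.\ epic — covers, and dually $\GI_{(\Z,\W)}$ is special preenveloping; the $\Hom$-exactness conditions in the definition of a balanced pair follow because the kernels/cokernels appearing in the resolutions lie in the middle class $\Inj(\A)^{\cogorro}_n$, against which both $\Ext^{>0}(\GP_{(\X,\Y)},-)$ and $\Ext^{>0}(-,\GI_{(\Z,\W)})$ vanish by the cotorsion-triple property (using Lemma \ref{finita} and its dual for the vanishing). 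Finally (iv): with (ii) and (iii) in hand, the triangle equivalence $\mathbf{K}(\GP_{(\X,\Y)})\cong\mathbf{K}(\GI_{(\Z,\W)})$ is Chen's \cite[Theorem 3.6 / Lemma 5.1]{Chen} style result — an admissible balanced pair coming from a complete hereditary cotorsion triple induces such an equivalence of homotopy categories via the functors sending a complex to its $\GP$-resolution resp.\ $\GI$-coresolution, which are mutually inverse up to homotopy because of the balanced-pair cohomology isomorphism $H^n(\Hom_\A(X^\bullet(M),N))\cong H^n(\Hom_\A(M,Y^\bullet(N)))$.

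The main obstacle I anticipate is step (ii)'s identification of the middle class: pinning down that the dual of Proposition \ref{prebalance} produces exactly $\Inj(\A)^{\cogorro}_n$ (and not merely some class equal to it only after using all the finiteness hypotheses) requires carefully running the dual of the diagram chase in that proof, keeping track of which of $\pd(\Z)<\infty$ versus $\glGID_{(\Z,\W)}(\A)\le n$ is doing the work at each stage. The remaining parts (iii) and (iv) are largely citations to \cite{Chen}, modulo checking that our cotorsion triple satisfies his hypotheses; the genuinely new content is concentrated in (i) and (ii).
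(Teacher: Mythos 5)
Your proposal is correct and follows essentially the same route as the paper: part (i) via Theorem \ref{Thm4.22}(ii) (which gives $\id(\Proj(\A))\leq n$) feeding the dual of Proposition \ref{ikenaga}(i) and then Corollary \ref{dimensiongloblal}; part (ii) by combining Proposition \ref{prebalance} with a dual cotorsion-pair statement; parts (iii)--(iv) by citing Chen. The only cosmetic difference is in (ii): the paper invokes the dual of Proposition \ref{cotorsionfinito}, which produces the pair $(\Inj(\A)^{\cogorro}_n,\GI_{(\Z,\W)})$ directly, so the middle-class identification $\Proj(\A)^{\gorro}_n=\Inj(\A)^{\cogorro}_n$ that you flag as the main obstacle is not actually needed (though it is indeed available from the proof of Proposition \ref{prebalance}).
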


\begin{proof}
\begin{itemize}
\item[(i)] We know that $\id (\Proj (\A)) \leq n $, by Theorem \ref{Thm4.22} (ii), and by hypothesis $ \pd (\Z) < \infty$. Thus using the dual of Proposition \ref{ikenaga} (i), we have $\mathrm{glGID}_{(\Z, \W )}(\A) \leq n$. Finally by Corollary \ref{dimensiongloblal} we conclude  $\mathrm{glGID}_{(\Z, \W )}(\A) = \mathrm{glGPD}_{(\X, \Y )}(\A)$.

\item[(ii)] From  Proposition \ref{prebalance} we know that $(\GP _{(\X, \Y)}, \Inj (\A) ^{\cogorro} _n )$ is a hereditary and complete cotorsion pair in $\A$. Furthermore from the dual of Proposition \ref{cotorsionfinito} and (i), we  know that  $(\Inj (\A) ^{\cogorro}_n, \GI _{(\Z, \W)})$ is an  hereditary and complete cotorsion pair.

\item[(iii)] Follows directly from \cite[Proposition 2.6]{Chen} and (ii).

\item[(iv)] From (iii)  $(\GP _{(\X, \Y)} , \GI _{(\Z, \W)})$ is an admissible balanced pair, and by  (i) we have that $\infty > n \geq \mathrm{glGPD}_{(\X, \Y )}(\A) = \mathrm{glGID}_{(\Z, \W )}(\A) $. Thus, the result follows from \cite[Theorem 4.1]{Chen}.
\end{itemize}
\end{proof}

To simplify we will write $\mathrm{gl.GP} (R)$ for $\mathrm{glGPD}_{(\Proj (R), \Proj (R) )}(R)$ and $\mathrm{gl.GI} (R)$ for $\mathrm{glGPD}_{(\Inj (R), \Inj (R) )}(R)$.  Recall that for every ring $R$, from \cite[Theorem 1.1]{Benn10}  $\mathrm{gl.GP} (R)$ coincides with $\mathrm{gl.GI} (R)$, thus we denote this quantities by $\mathrm{gl.G.dim}(R)$. 
 Also for every ring $R$, from \cite[Theorem 3.1]{MadTam} it holds  $\mathrm{gl.G.dim}(R) = \mathrm{gl.DP}(R)$. 
Thus when $R$ is a Ding-Chen ring and $\mathrm{gl.DP} (R)$ or $ \mathrm{gl.DI}(R)$ is finite, it holds  $\mathrm{gl.G.dim}(R) = \mathrm{gl.DP}(R)= \mathrm{gl.DI}(R)$.
But it is not known if  for any  ring $R$ the quantities $\mathrm{gl.G.dim}(R) $, $ \mathrm{gl.D.dim}(R)$ are the same (see \cite[\S 3.1]{Wang}). We have the following result related.

\begin{pro}\label{Question}
Let $R$ be a ring. If $\pd (\mathcal{FP}\Inj(R)) < \infty$ and $\id(\Proj (R)) < \infty$, then $\mathrm{gl.G.dim}(R) = \mathrm{gl.D.dim}(R) < \infty$.
\end{pro}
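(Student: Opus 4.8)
The plan is to assemble the statement from the relative characterizations of the global dimensions obtained above, together with the two facts recalled just before this proposition: Bennis's equality $\mathrm{gl.GP}(R)=\mathrm{gl.GI}(R)=\mathrm{gl.G.dim}(R)$, and the equality $\mathrm{gl.G.dim}(R)=\mathrm{gl.DP}(R)$ of \cite[Theorem 3.1]{MadTam}, both valid for an arbitrary ring. The first observation I would make is the trivial inclusion $\Inj(R)\subseteq\mathcal{FP}\Inj(R)$ (every injective module is $\mathcal{FP}$-injective), which turns the hypothesis $\pd(\mathcal{FP}\Inj(R))<\infty$ into the bound $\pd(\Inj(R))\leq\pd(\mathcal{FP}\Inj(R))<\infty$.

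Next I would apply Proposition \ref{ikenaga} (i) to the GP-admissible pair $(\Proj(R),\Proj(R))$ (it is GP-admissible since $\Modu(R)$ has enough projectives, and $\Modu(R)$ has enough projective and injective objects as required): from $\pd(\Inj(R))<\infty$ and the hypothesis $\id(\Proj(R))<\infty$ one gets $\mathrm{gl.G.dim}(R)=\mathrm{glGPD}_{(\Proj(R),\Proj(R))}(R)<\infty$, hence $\mathrm{gl.DP}(R)=\mathrm{gl.G.dim}(R)<\infty$ by \cite[Theorem 3.1]{MadTam}. Dually, I would apply the dual of Proposition \ref{ikenaga} (i) to the GI-admissible pair $(\mathcal{FP}\Inj(R),\Inj(R))$: from $\id(\Proj(R))<\infty$ and $\pd(\mathcal{FP}\Inj(R))<\infty$ one obtains $\mathrm{gl.DI}(R)=\mathrm{glGID}_{(\mathcal{FP}\Inj(R),\Inj(R))}(R)<\infty$. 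In particular both $\GP_{(\Proj(R),\Flat(R))}^{\gorro}=\Modu(R)$ and $\GI_{(\mathcal{FP}\Inj(R),\Inj(R))}^{\cogorro}=\Modu(R)$ hold.

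Finally, since $(\Proj(R),\Flat(R))$ is GP-admissible, $(\mathcal{FP}\Inj(R),\Inj(R))$ is GI-admissible, and both $\GP_{(\Proj(R),\Flat(R))}^{\gorro}=\Modu(R)$ and $\GI_{(\mathcal{FP}\Inj(R),\Inj(R))}^{\cogorro}=\Modu(R)$, Corollary \ref{dimensiongloblal} gives $\mathrm{gl.DP}(R)=\mathrm{gl.DI}(R)$; thus $\mathrm{gl.D.dim}(R)$ is defined and equals $\mathrm{gl.DP}(R)=\mathrm{gl.G.dim}(R)<\infty$, which is the assertion. I do not expect a genuine obstacle here: the only point needing a little care is the opening reduction — feeding the characterizations the auxiliary dimensions $\pd(\Inj(R))$ and $\id(\Proj(R))$ rather than the hypotheses verbatim — and checking that the ``enough projectives/injectives'' hypotheses of Proposition \ref{ikenaga} are met in $\Modu(R)$, which they are. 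A route avoiding \cite[Theorem 3.1]{MadTam} would be to run Proposition \ref{ETriangulada} with $(\Proj(R),\Proj(R))$ against $(\mathcal{FP}\Inj(R),\Inj(R))$ to obtain $\mathrm{gl.G.dim}(R)=\mathrm{gl.DI}(R)$ directly, but one would still need $\mathrm{gl.DP}(R)<\infty$ (equivalently $\id(\Flat(R))<\infty$) to close the loop through Corollary \ref{dimensiongloblal}, so invoking \cite[Theorem 3.1]{MadTam} remains the most economical option.
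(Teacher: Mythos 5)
Your argument is correct; every step checks out. The opening reduction $\Inj(R)\subseteq\mathcal{FP}\Inj(R)$, hence $\pd(\Inj(R))\leq\pd(\mathcal{FP}\Inj(R))<\infty$, is valid, Proposition \ref{ikenaga} (i) applied to the GP-admissible pair $(\Proj(R),\Proj(R))$ and its dual applied to $(\mathcal{FP}\Inj(R),\Inj(R))$ do give $\mathrm{gl.G.dim}(R)<\infty$ and $\mathrm{gl.DI}(R)<\infty$ respectively, and once $\mathrm{gl.DP}(R)<\infty$ is secured via \cite[Theorem 3.1]{MadTam} the hypotheses of Corollary \ref{dimensiongloblal} for the pairs $(\Proj(R),\Flat(R))$ and $(\mathcal{FP}\Inj(R),\Inj(R))$ are met, closing the loop.

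The route is genuinely different from the paper's, though it shares the same skeleton (make both relative global dimensions finite, then identify them). The paper also starts by getting $\mathrm{gl.DI}(R)<\infty$ from the dual of Proposition \ref{ikenaga}, but then it uses the dual of Proposition \ref{ETriangulada} (i) with $(\Proj(R),\Proj(R))$ to obtain $\mathrm{gl.GP}(R)=\mathrm{gl.DI}(R)$, invokes Bennis's \cite[Corollary 2.7]{Benn10} to deduce $\id(\Flat(R))<\infty$ from $\mathrm{gl.G.dim}(R)<\infty$, and applies the dual of Proposition \ref{ETriangulada} (i) a second time with $(\Proj(R),\Flat(R))$ to get $\mathrm{gl.DP}(R)=\mathrm{gl.DI}(R)$; the external equality $\mathrm{gl.G.dim}(R)=\mathrm{gl.DP}(R)$ of \cite{MadTam} is recalled beforehand but not used inside the proof. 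You instead outsource the flat-dimension step entirely to \cite[Theorem 3.1]{MadTam} and replace the cotorsion-triple machinery of Proposition \ref{ETriangulada} with the lighter Corollary \ref{dimensiongloblal}. What your version buys is economy and a cleaner logical flow (one black box instead of two, no second pass through Proposition \ref{ETriangulada}); what the paper's version buys is that the identification $\mathrm{gl.DP}=\mathrm{gl.DI}$ is carried out almost entirely inside its own relative framework, with only Bennis's classical corollary imported. Both are legitimate; your reliance on \cite{MadTam} is consistent with the paper's own assertion that this equality holds for every ring.
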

\begin{proof}
From the dimensions  $\pd (\mathcal{FP}\Inj(R)) < \infty$,  $\id(\Proj (R)) < \infty$ and the GI-admissible pair $(\mathcal{FP}\Inj(R), \Inj(\A))$, by de dual of Proposition \ref{DingGlobal} (i) we know that $\mathrm{gl.DI}(R) < \infty$. Then from the dual of Proposition \ref{ETriangulada} (i) with the GP-admissible pair $(\Proj (R), \Proj (R))$ we get $\mathrm{gl.GP}(R) = \mathrm{gl.DI}(R) < \infty $. That is, $\mathrm{gl.G.dim}(R) = \mathrm{gl.DI}(R) $.  Since $\mathrm{gl.G.dim}(R) < \infty$, by \cite[Corollary 2.7]{Benn10} we get $\id (\Flat (R)) < \infty$. Finally from the dimensions $\id(\Flat (R)) < \infty$, $\mathrm{gl.DI}(R) < \infty$ and  for the GP-admissible pair $(\Proj (R), \Flat (R) )$  by the dual of Proposition \ref{ETriangulada} (i), we get $\mathrm{gl.DP}(R) = \mathrm{gl.DI}(R)$.
\end{proof}

\begin{pro}\label{Question2}
Let $R$ be a ring. If $\id (\mathrm{Lev}(R)) < \infty$ and $\pd(\Inj (R)) < \infty$, then 
$$\mathrm{gl.DP}(R) =\mathrm{gl.G.dim}(R) = \mathrm{gl.GP_{AC}}(R) < \infty.$$
\end{pro}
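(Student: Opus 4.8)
The plan is to reduce everything to the already-established machinery of Propositions \ref{ikenaga}, \ref{DingGlobal}, \ref{Question}, and \ref{ACgorenstein}, using the standard containments $\Proj(R)\subseteq\Flat(R)\subseteq\mathrm{Lev}(R)$ and $\Inj(R)\subseteq\mathcal{FP}\Inj(R)\subseteq\mathrm{AC}(R)$ together with the duality between the level/absolutely clean picture and the flat/FP-injective one. First I would observe that $\id(\mathrm{Lev}(R))<\infty$ forces $\id(\Flat(R))<\infty$ (since $\Flat(R)\subseteq\mathrm{Lev}(R)$) and $\id(\Proj(R))<\infty$ (since $\Proj(R)\subseteq\Flat(R)$). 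Combined with the hypothesis $\pd(\Inj(R))<\infty$, Proposition \ref{DingGlobal} applied to the GP-admissible pair $(\Proj(R),\Flat(R))$ gives $\mathrm{gl.DP}(R)<\infty$.

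Next I would feed this into the $\mathrm{AC}$-picture: from $\id(\mathrm{Lev}(R))<\infty$ and $\pd(\Inj(R))\le n$ for some finite $n$, Proposition \ref{ACgorenstein}(i) yields $\mathrm{gl.GP_{AC}}(R)=n<\infty$, and in particular $\GP_{(\Proj(R),\mathrm{Lev}(R))}^{\gorro}(R)=\Modu(R)$. Then I would invoke Proposition \ref{ETriangulada}(i) (or its dual), applied with the GP-admissible pair $(\Proj(R),\Proj(R))$ and the GI-admissible pair $(\mathcal{FP}\Inj(R),\Inj(R))$ — whose hypotheses $\mathrm{glGPD}_{(\Proj(R),\Proj(R))}(R)\le n$ and $\pd(\mathcal{FP}\Inj(R))<\infty$ need to be checked: the first is just $\mathrm{gl.G.dim}(R)\le n$, which I obtain by noting $\mathrm{gl.G.dim}(R)=\mathrm{gl.DP}(R)<\infty$ by \cite[Theorem 3.1]{MadTam}, and the second follows from $\mathcal{FP}\Inj(R)\subseteq\mathrm{AC}(R)$ and $\pd(\mathrm{AC}(R))<\infty$ — the latter coming from Proposition \ref{ACgorenstein}(ii) once we know $\mathrm{gl.GI_{AC}}(R)<\infty$, which in turn follows from $\id(\Proj(R))<\infty$ and $\pd(\mathrm{AC}(R))<\infty$. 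To avoid circularity here I would instead get $\pd(\mathrm{AC}(R))<\infty$ directly: $\pd(\mathrm{AC}(R))\le\mathrm{gl.GI_{AC}}(R)$ and the finiteness of the latter is equivalent, by the dual of Proposition \ref{ikenaga}(i), to $\id(\Proj(R))<\infty$ together with $\pd(\mathrm{AC}(R))<\infty$ — so this does not by itself close the loop. The clean way is: $\pd(\Inj(R))<\infty$ and $\id(\mathrm{Lev}(R))<\infty$ give, via Proposition \ref{ACgorenstein}(iii)'s hypothesis check, both $\pd(\mathrm{AC}(R))<\infty$ and $\id(\mathrm{Lev}(R))<\infty$ — wait, (iii) already assumes $\pd(\mathrm{AC}(R))<\infty$. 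So the actual missing input is exactly $\pd(\mathrm{AC}(R))<\infty$, which I would deduce from the dual of Proposition \ref{Question}: its hypotheses are $\pd(\mathcal{FP}\Inj(R))<\infty$ and $\id(\Proj(R))<\infty$; the second holds, and the first holds because $\mathcal{FP}\Inj(R)\subseteq\mathrm{AC}(R)$ — but that again presupposes $\pd(\mathrm{AC}(R))<\infty$.

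Given this tangle, the cleanest route — and the one I would actually write — bypasses $\mathrm{AC}$ finiteness as a separate fact and uses Proposition \ref{ACgorenstein}(i) as the single entry point: it requires only $\pd(\Inj(R))\le n$ and $\id(\mathrm{Lev}(R))<\infty$, both of which are hypotheses. This directly gives $\mathrm{gl.GP_{AC}}(R)<\infty$. By \cite[Theorem 3.1]{MadTam}, $\mathrm{gl.G.dim}(R)=\mathrm{gl.DP}(R)$; and $\mathrm{gl.DP}(R)<\infty$ was established in the first paragraph. It remains to show $\mathrm{gl.GP_{AC}}(R)=\mathrm{gl.G.dim}(R)=\mathrm{gl.DP}(R)$, which I would get from Theorem \ref{Thm4.22}(ii) applied to each of the three GP-admissible pairs $(\Proj(R),\mathrm{Lev}(R))$, $(\Proj(R),\Proj(R))$, $(\Proj(R),\Flat(R))$: since $\GP^{\gorro}=\Modu(R)$ in all three cases (finiteness of the respective global dimensions, already in hand), we get $\mathrm{gl.GP_{AC}}(R)=\id(\mathrm{Lev}(R))$, $\mathrm{gl.G.dim}(R)=\id(\Proj(R))$, $\mathrm{gl.DP}(R)=\id(\Flat(R))$. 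Finally, the sandwich $\Proj(R)\subseteq\Flat(R)\subseteq\mathrm{Lev}(R)$ gives $\id(\Proj(R))\le\id(\Flat(R))\le\id(\mathrm{Lev}(R))$; and a reverse inequality among the corresponding global dimensions — forcing equality — comes from the fact that each of these Gorenstein classes contains $\Proj(R)$, so a bound on one global Gorenstein projective dimension bounds the ordinary $\pd$ and hence, via Theorem \ref{Thm4.22}(i), the others from below through $\pd(\Inj(R))$. The main obstacle, as the second paragraph shows, is exactly pinning down which finiteness facts are genuinely independent inputs and arranging the citations so the argument is not circular; once the three-pairs application of Theorem \ref{Thm4.22}(ii) is used as the organizing principle, the equalities fall out and the whole chain is finite because $\mathrm{gl.DP}(R)<\infty$.
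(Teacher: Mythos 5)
Your overall route is the same as the paper's: get $\mathrm{gl.DP}(R)<\infty$ from Proposition \ref{DingGlobal}, get $\mathrm{gl.GP_{AC}}(R)<\infty$ from Proposition \ref{ACgorenstein}(i), and then combine Theorem \ref{Thm4.22}(ii) for the relevant GP-admissible pairs with \cite[Theorem 3.1]{MadTam}. Your second paragraph's tangle is correctly diagnosed as a dead end: the paper never needs $\pd(\mathrm{AC}(R))$ or any GI-side input for this statement, so abandoning that thread was the right call.

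The one step that does not work as written is the final collapse of the sandwich $\id(\Proj(R))\le\id(\Flat(R))\le\id(\mathrm{Lev}(R))$ into equalities. You try to force equality by a ``reverse inequality'' extracted from Theorem \ref{Thm4.22}(i), but that result only provides the \emph{common lower bound} $\pd(\Inj(\A))\le\glGPD_{(\X,\Y)}(\A)$ for each pair; it gives no upper bound of one global Gorenstein dimension by another, so it cannot close the sandwich. The repair is already contained in the theorem you are citing: Theorem \ref{Thm4.22}(ii) asserts the full string of equalities $\id(\Proj(\A))=\id(\Y)=\glGPD_{(\X,\Y)}(\A)$, not merely $\glGPD_{(\X,\Y)}(\A)=\id(\Y)$. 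Applied to $(\Proj(R),\mathrm{Lev}(R))$ and to $(\Proj(R),\Flat(R))$ (the latter legitimate since $\id(\Flat(R))\le\id(\mathrm{Lev}(R))<\infty$ makes Proposition \ref{DingGlobal} applicable), it yields $\mathrm{gl.GP_{AC}}(R)=\id(\Proj(R))=\mathrm{gl.DP}(R)$ at once, and then $\mathrm{gl.G.dim}(R)=\mathrm{gl.DP}(R)$ by \cite[Theorem 3.1]{MadTam}. This is precisely the paper's argument; no sandwich and no third application of the theorem to $(\Proj(R),\Proj(R))$ are needed.
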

\begin{proof}
From Proposition \ref{ACgorenstein} we have $\mathrm{gl.GP_{AC}}(R) < \infty$. Furthermore by Theorem \ref{Thm4.22} (ii) we have $\mathrm{gl.GP_{AC}}(R) = \id (\Proj (\A))$. Since projective modules are flat, and flat modules are level, we get that   
$$\id (\Proj (R)) \leq \id (\Flat (R)) \leq \id (\mathrm{Lev} (R)) < \infty.$$
 Thus by Proposition \ref{DingGlobal} and Theorem \ref{Thm4.22} (ii) we have $\mathrm{gl.DP}(R) = \id (\Proj(\A))$.  Finally from from \cite[Theorem 3.1]{MadTam} we have  $\mathrm{gl.G.dim}(R) = \mathrm{gl.DP}(R)$.
\end{proof}

\textbf{Funding} The author was fully supported by a  PEDECIBA (UDELAR-MEC) Uruguay Postdoctoral Fellowship.\\

\textbf{Acknowledgements} The author thanks to professor Marco A. Per\'ez for suggestions and nice discussions about  this paper.

\end{document}